\numberwithin{equation}{section}
\numberwithin{equation}{section}
\theoremstyle{plain}
\newtheorem{thm}[equation]{Theorem}
\newtheorem{lem}[equation]{Lemma}
\newtheorem{prop}[equation]{Proposition}
\newtheorem{cor}[equation]{Corollary}
\newtheorem*{thm*}{Theorem}
\newtheorem*{lem*}{Lemma}
\newtheorem*{prop*}{Proposition}
\newtheorem*{cor*}{Corollary}
\theoremstyle{definition}
\newtheorem{defn}[equation]{Definition}
\newtheorem*{defn*}{Definition}
\newtheorem{ex}[equation]{Example}
{}
\newtheorem{rem}[equation]{Remark}
\newtheorem*{rem*}{Remark}
\newtheorem*{ack}{Acknowledgements}{}
\newtheorem{qn}[equation]{Question}{}
\newtheorem{slogan}[equation]{Slogan}{}
\theoremstyle{remark}
{}
{}
{}
\def\to{\longrightarrow} 
\def\FF{\mathbb{F}}
\def\NN{\mathbb{N}}
\def\QQ{\mathbb{Q}}
\def\ZZ{\mathbb{Z}}
\def\sfA{\mathsf{A}}
\def\sfD{\mathsf{D}}
\def\sfG{\mathsf{G}}
\def\sfJ{\mathsf{J}}
\def\sfK{\mathsf{K}}
\def\sfN{\mathsf{N}}
\def\sfR{\mathsf{R}}
\def\sfT{\mathsf{T}}
\def\sfX{\mathsf{X}}
\def\mcE{\mathcal{E}}
\def\mcF{\mathcal{F}}
\def\mfm{\mathfrak{m}}
\def\mfp{\mathfrak{p}}
\def\mfq{\mathfrak{q}}
\def\op{\mathrm{op}}
\DeclareMathOperator{\hocolim}{hocolim}
\DeclareMathOperator{\holim}{holim}
\DeclareMathOperator{\Hom}{Hom}
\DeclareMathOperator{\id}{id}
\DeclareMathOperator{\im}{im}
\DeclareMathOperator{\modu}{\mathsf{mod}}
\DeclareMathOperator{\Inj}{\mathsf{Inj}}
\DeclareMathOperator{\RHom}{\mathbf{R}Hom}
\DeclareMathOperator{\Ext}{Ext}
\DeclareMathOperator{\Coh}{Coh}
\DeclareMathOperator{\QCoh}{QCoh}
\DeclareMathOperator{\Spec}{Spec}
\DeclareMathOperator{\loc}{\mathrm{loc}}
\DeclareMathOperator{\thick}{thick}
\definecolor{internationalkleinblue}{rgb}{0.0, 0.18, 0.65}
\title[Proxy-small object]{Proxy-small objects present \\ compactly generated categories}
\author{Benjamin Briggs}
\address{Benjamin Briggs, Department of Mathematics, Huxley Building, South Kensington Campus, Imperial College London, 
London, SW7 2AZ, U.K.}
\email{b.briggs@imperial.ac.uk}
\author{Srikanth B.\ Iyengar}
\address{Srikanth B.\ Iyengar, Department of Mathematics, University of Utah, Salt Lake City, UT 84112, U.S.A.}
\email{srikanth.b.iyengar@utah.edu}
\author{Greg Stevenson}
\address{Greg Stevenson, Aarhus University, Department of Mathematics, Ny Munkegade 118, bldg. 1530
DK-8000 Aarhus C, Denmark}
\email{greg@math.au.dk}
\date{\today}
\keywords{compactly generated triangulated category, complete intersection, small object, proxy-small object}
\subjclass[2010]{18G80 (primary);  13D09 (secondary)}
\begin{document}

\begin{abstract}
\noindent  We develop a correspondence between presentations of compactly generated triangulated categories as localizations of derived categories of ring spectra and proxy-small objects, and explore some consequences. In addition, we give a characterization of proxy-smallness in terms of coproduct preservation of the associated corepresentable functor `up to base change'.
\end{abstract}

\maketitle




\section{Introduction}

Proxy-smallness is a relatively new concept that was introduced in \cite{DGI_finiteness,DGI_duality} as a weakening of smallness, also called compactness, in triangulated categories. It is general enough that the residue field of any noetherian local ring is proxy-small in the derived category, but strong enough to yield useful formulas for local cohomology. This has had striking applications, both in homotopy theory and in commutative algebra. For instance, Dwyer, Greenlees, and Iyengar proved \cite{DGI_finiteness}*{Theorem~9.4} that if a commutative noetherian ring $R$ is complete intersection then every object of $\sfD^\mathrm{b}(\modu R)$ is proxy-small and they asked if this property characterized complete intersections. This remarkable statement turns out to be true, as was proved by Pollitz \cite{pollitzci}. This opens many doors. By analogy it furnishes us with an elegant and homotopy invariant definition of complete intersection in derived settings, and it supports rather appealing, if speculative, connections to questions concerning finite generation of cohomology. The purpose of this article is to give a conceptual interpretation of proxy-smallness in analogy with the definition of compactness, that is in terms of some functor preserving coproducts. 

Let $\sfK$ be an enhanced compactly generated triangulated category. Given a compact object $k\in \sfK$ there is an equivalence between the localizing subcategory $\loc(k)$ generated by $k$ and the derived category $\sfD(E)$ where $E$ is the derived endomorphism ring $\RHom_{\sfK}(k,k)$ of $k$.  This equivalence is given by the adjoint pair of functors $(-)\otimes_E k$ and $\RHom_{\sfK}(k,-)$, and boils down to the statements $E = \RHom_{\sfK}(k,k)$ and $E\otimes_E k = k$, since both functors preserve coproducts.  For a non-compact object $P$ this statement is  false in general \textemdash{}the localizing subcategory $\loc(P)$ depends on how $P$ sits in $\sfK$ in a subtle way, and not only on its endomorphism ring. For instance, in the derived category of abelian groups $\sfD(\ZZ)$ the objects $\ZZ_p$ and $\ZZ[p^{-1}]/\ZZ$ have the same derived endomorphism ring but generate different localizing subcategories.  The slogan is: giving a proxy-small object $P$ is the same thing as giving a presentation of the compactly generated localizing subcategory $\loc(P)$ as a quotient of $\sfD(\RHom_\sfK(P,P))$ (see \ref{slogan}). 

This shows that proxy-smallness is  ubiquitous.  Moreover, as we point out, the proxy-small condition is hidden behind fundamental constructions such as passing from the derived category of a coherent ring $R$ to its ind-coherent derived category $\sfK(\Inj R)$.

Along the way, in Theorem~\ref{thm:proxysmall}, we give a purely functorial characterization of the proxy-small condition when the ambient category satisfies the telescope conjecture. An object $P$ with $E=\RHom(P,P)$ is proxy-small if and only if $\RHom(P,-)\otimes_E P$ preserves coproducts. We also consider the relative version, where one enhanced triangulated category acts on another. The paper concludes with a discussion of the closure conditions the proxy-small objects satisfy as well as some speculations and applications.

\begin{ack}
Briggs was partly supported by the European Union under Grant Agreement 10106455, and Iyengar was partly supported by National Science Foundation grant DMS-2001368. The authors thank John Greenlees for his comments, suggestions, and interest.
\end{ack}

\section{Proxy-smallness revisited}

We begin with some `recollections' on proxy-smallness and effective constructibility, both in the sense of \cite{DGI_duality}. These are `recollections' rather than recollections because the definition of proxy-smallness we use is slightly more general than the original and we give a different perspective, with some new results, on effective constructibility.

Let us fix a compactly generated stable $\infty$-category (i.e.\ an $\aleph_0$-presentable stable $\infty$-category) with homotopy category $\sfK$. The subcategory of compact objects in $\sfK$ is denoted $\sfK^\mathrm{c}$; it is, by definition, essentially small. Essentially $\sfK$ is a compactly generated triangulated category (see \cite{LurieHA}*{Remark~1.4.4.3}) and we can speak of mapping spaces, homotopy (co)limits, and so on. In particular, given any two objects there is a spectrum of maps between them (see for instance Section~2.3 of \cite{BGT} and in particular Definition~2.15) and hence any object has a ring spectrum of endomorphisms.

Little is lost if the reader prefers to think of instead fixing a differential graded, or model, category with compactly generated homotopy category. In fact, essentially all arguments take place at the level of homotopy categories. In the dg setting we would consider for any pair of objects the chain complex of maps between them and one could replace ring spectra by dg algebras.

Throughout by `localization' we will mean a reflective localization of stable $\infty$-categories, i.e.\ an exact functor with a fully faithful right adjoint (what Lurie simply calls a localization in \cite{LurieHTT}*{5.2.7.2} in the not necessarily stable setting). This is equivalent to a localization in the usual sense, i.e.\ given by inverting a suitable collection of maps in a homotopy coherent way, which moreover has a right adjoint (cf.\ \cite{kerodon} Proposition 04JL and the linked definitions). In the differential graded context this can be modelled by a Drinfeld quotient admitting a right adjoint. 

Again, given the adjunction, this boils down to something which can be checked on the homotopy categories. The right adjoint is by definition fully faithful precisely when it induces weak equivalences of mapping spaces, and so given stability, exactly when it is fully faithful on the homotopy category. This can be rephrased as asking that the components of the counit are equivalences, i.e.\ isomorphisms in the homotopy category.

For a set of objects $\sfX$ of $\sfK$ we denote by $\thick(\sfX)$ and $\loc(\sfX)$ the smallest triangulated subcategories containing $\sfX$ and closed under summands and arbitrary coproducts respectively.

\begin{defn}\label{def_set_version}
An object $P\in \sfK$ is a \emph{proxy-small object with small set of proxies} $\mathbf{p}$ if 
\[
\mathbf{p} \subseteq  \thick(P)\cap \sfK^\mathrm{c} \text{ and } P\in \loc(\mathbf{p}).
\]
We will often just say that $P$ is proxy-small without fixing a set of proxies.
\end{defn}

Throughout we regard $\mathbf{p}$ as a full subcategory of $\sfK$ and we use $i$ to denote the inclusion. Using the enhancement we may consider the ind-completion $\sfD(\mathbf{p})$ of $\thick(\mathbf{p})$ and this identifies with the full subcategory $\loc(\mathbf{p})$ of $\sfK$ with $i^*$ the corresponding fully faithful left Kan extension, i.e.\ the inclusion of $\loc(\mathbf{p})$. Details on ind-completion in general can be found in \cite{LurieHTT}*{5.3} (see in particular 5.3.5.10 and 5.3.5.11) and see \cite{LurieHA}*{1.1.3.6} for the fact that this construction preserves stability. 

\begin{rem}
If $P$ is proxy-small then $\thick(P)\cap \sfK^\mathrm{c}$ is a canonical set of proxies.
\end{rem}

\begin{rem}
If $P$ is proxy-small with set of proxies $\mathbf{p}$ then 
\[
\loc(\mathbf{p}) = \loc(P) = \loc(\thick(P)\cap \sfK^\mathrm{c}).
\]
\end{rem}

\begin{rem}
This is a small generalization of the original definition of Dwyer, Greenlees, and Iyengar \cite{DGI_duality}. Instead of insisting there be a single small proxy we allow a set of such. This has the advantage that it does not depend on a choice of proxy if we just take $\mathbf{p} = \sfK^\mathrm{c} \cap \thick(P)$.

In fact one could, and in some cases should, go further: we could also replace $P$ by a set of objects; see \ref{rem:kinj}. Everything works just as well if one takes this extra step. In our primary cases of interest we can get away with a single object $P$ and so we stick to this setting\textemdash{}replacing $P$ by a subcategory is essentially cosmetic.
\end{rem}

Let us consider the ring spectrum $E = \RHom(P,P)$ and the full subcategory $\mathbf{p}$ of $\sfK$ and form the following diagram

\begin{equation}
\label{eq:diagram}
\begin{tikzcd}
\sfD(E) \arrow[drr, shift left=3] \arrow[d, shift right]   & & \\
\loc(\mathbf{p}) \arrow[d, shift left] \arrow[rr, shift left, "i_*"] \arrow[u, shift right] & & \sfK \arrow[ll, shift left, "i^!"] \arrow[ull, shift right] \arrow[dll, shift left=3] \\
\sfD(\mathbf{p}) \arrow[u, shift left, "\wr"] \arrow[urr, shift right] &&
\end{tikzcd}
\end{equation}
consisting of the various tensor-hom adjunctions, and which commutes in the obvious ways by construction. Namely, the unlabelled left adjoints are given as follows: $\sfD(E) \to \sfK$ is $(-)\otimes_E P$ which factors via $\loc(P) = \loc(\mathbf{p})$ by virtue of preserving coproducts, $\sfD(\mathbf{p}) \to \loc(\mathbf{p})$ is the equivalence determined by the universal property of ind-completion (that is, by sending $\RHom_{\mathbf{p}}(-,p)$ to $p$ for $p \in \mathbf{p}$), and $\sfD(\mathbf{p}) \to \sfK$, which we denote by $(-)\otimes_\mathbf{p} \mathbf{p}$, is also determined by the universal property.

\begin{rem}
In most cases of interest it is possible to pick a single proxy and take $\mathbf{p} = \{p\}$. In this case we can just deal with $B = \RHom(p,p)$, for instance $\sfD(\mathbf{p}) \cong \sfD(B)$ and $\sfD(B) \to \sfK$ is the functor $(-)\otimes_B p$ which factors via $\loc(p)$. The reader may find it helpful to keep this case in mind.
\end{rem}

For the rest of this section $P$ is a proxy-small object in $\sfK$ with a set of proxies $\mathbf{p}$. We are primarily concerned with understanding the functor
\begin{equation}
\label{eq:gamma}    
\Gamma = i_*i^! \cong \RHom_\sfK(\mathbf{p},-)\otimes_\mathbf{p} \mathbf{p}.
\end{equation}
To this end let us begin by understanding the composite $\sfD(E) \to \sfD(\mathbf{p})$.

\begin{lem}
\label{lem:tech}
The functor $\sfD(E) \to \sfD(\mathbf{p})$ is naturally isomorphic to
\[
-\otimes_E \RHom_\sfK(\mathbf{p},P)
\]
where $\RHom_\sfK(\mathbf{p},P)$ is the $ E^\op\otimes \mathbf{p}$-module given by restricting $\RHom_\sfK(-,-)$ to the indicated objects. 
\end{lem}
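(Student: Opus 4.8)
The plan is to make the functor $\sfD(E)\to\sfD(\mathbf{p})$ of diagram~\eqref{eq:diagram} explicit. By construction it is the composite of the realization functor $(-)\otimes_E P\colon\sfD(E)\to\sfK$ --- which preserves coproducts and hence factors through $\loc(P)=\loc(\mathbf{p})$ --- with the inverse of the ind-completion equivalence $\sfD(\mathbf{p})\xrightarrow{\ \sim\ }\loc(\mathbf{p})$. That inverse is the restriction to $\loc(\mathbf{p})$ of the right adjoint $\RHom_\sfK(\mathbf{p},-)\colon\sfK\to\sfD(\mathbf{p})$, $Y\mapsto(p\mapsto\RHom_\sfK(p,Y))$, of the realization functor $(-)\otimes_{\mathbf{p}}\mathbf{p}$: indeed the latter sends the representable module $\RHom_{\mathbf{p}}(-,p)=\RHom_\sfK(\mathbf{p},p)$ to $p$, so the counit of this adjunction is an equivalence on $\mathbf{p}$ and hence on all of $\loc(\mathbf{p})$. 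Thus the functor we must identify is $X\mapsto\RHom_\sfK(\mathbf{p},X\otimes_E P)$, and the lemma becomes the assertion that the natural comparison map
\[
\alpha_X\colon X\otimes_E\RHom_\sfK(\mathbf{p},P)\longrightarrow\RHom_\sfK(\mathbf{p},X\otimes_E P)
\]
is an equivalence for every $X\in\sfD(E)$.

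I would build $\alpha$ pointwise in $p\in\mathbf{p}$: let $\alpha_X(p)$ be the morphism adjoint over $\base$ to $X\otimes_E(\mathrm{ev})\colon X\otimes_E\bigl(\RHom_\sfK(p,P)\otimes_\base p\bigr)\to X\otimes_E P$, where $\mathrm{ev}\colon\RHom_\sfK(p,P)\otimes_\base p\to P$ is the evaluation map; the latter is linear for the left $E$-action by postcomposition, so $X\otimes_E(\mathrm{ev})$ makes sense. This is natural in $p$, so assembles to a morphism in $\sfD(\mathbf{p})$, and it is natural in $X$; at $X=E$ it is the identity of $\RHom_\sfK(\mathbf{p},P)$.

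That $\alpha_X$ is an equivalence I would then obtain by dévissage on $\sfD(E)=\loc(E)$. Since a morphism in $\sfD(\mathbf{p})$ is an equivalence precisely when it is so after evaluation at each $p\in\mathbf{p}$, it is enough to compare the two functors $X\mapsto X\otimes_E\RHom_\sfK(p,P)$ and $X\mapsto\RHom_\sfK(p,X\otimes_E P)$ from $\sfD(E)$ to $\sfD(\base)$. Both are exact; the first preserves coproducts because the tensor product does, and the second preserves coproducts because $(-)\otimes_E P$ does and --- this is the one essential input --- $p$ is a compact object of $\sfK$, by the hypothesis $\mathbf{p}\subseteq\sfK^\mathrm{c}$ built into proxy-smallness, so $\RHom_\sfK(p,-)$ preserves coproducts. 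As $\alpha_E$ is an equivalence, the full subcategory of those $X\in\sfD(E)$ on which $\alpha_X$ is an equivalence is localizing and contains $E$, hence equals $\sfD(E)$.

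The part I expect to demand the most care is the module-theoretic bookkeeping: verifying that $\RHom_\sfK(\mathbf{p},P)$ carries precisely the stated $E^\op\otimes\mathbf{p}$-module structure and that $\alpha$ is a map of such modules. This can be largely circumvented by a more structural argument: the functor $\sfD(E)\to\sfD(\mathbf{p})$ preserves all colimits (it is $(-)\otimes_E P$ followed by an equivalence), so by the Morita-theoretic classification of colimit-preserving functors between module $\infty$-categories (see \cite{LurieHA}) it is of the form $(-)\otimes_E M$ with $M$ its value on $E$, namely $\RHom_\sfK(\mathbf{p},E\otimes_E P)=\RHom_\sfK(\mathbf{p},P)$ with its induced bimodule structure; it then remains only to recognize that structure as the evident one. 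Either way, compactness of the proxies is the ingredient that cannot be omitted.
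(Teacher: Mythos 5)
Your argument is correct and follows essentially the same line as the paper's: identify the composite as $\RHom_\sfK(\mathbf{p},-\otimes_E P)$, produce the natural comparison map (the paper traces the identity through a chain of adjunctions, you build it via the evaluation map --- these yield the same transformation), and conclude by noting that both functors preserve coproducts (using that the proxies are compact) and that the map is an isomorphism at $E$. The Morita-theoretic reformulation you add at the end is a nice shortcut but amounts to the same underlying argument.
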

\begin{proof}
The composite $\RHom_\sfK(\mathbf{p}, -\otimes_E P):\sfD(E) \to \sfD(\mathbf{p})$ preserves coproducts since $\mathbf{p}$ consists of compacts. As we vary $M\in \sfD(E)$, by tracing the image of the identity under the chain of isomorphisms
\begin{align*}
\RHom_K(M\otimes_E P,M\otimes_E P) &\cong \RHom_E(M, \RHom_\sfK(P, M\otimes_E P)) \\
&\cong \RHom_E(M, \RHom_\mathbf{p}(\RHom_\sfK(\mathbf{p},P), \RHom_\sfK(\mathbf{p}, M\otimes_E P))) \\
&\cong \RHom_\mathbf{p}(M\otimes_E \RHom_\sfK(\mathbf{p},P), \RHom_E(\mathbf{p}, M\otimes_E P)),
\end{align*}
where the third isomorphism is fully faithfulness of $\RHom_\sfK(\mathbf{p},-)$ on $\loc(\mathbf{p})$ and the others are adjunction isomorphisms, we obtain a natural map
\[
-\otimes_E \RHom_\sfK(\mathbf{p},P) \to \RHom_\sfK(\mathbf{p}, -\otimes_E P).
\]
This is evidently an isomorphism at $E$, and both functors preserve coproducts, so this natural transformation is an isomorphism as claimed.
\end{proof}

\begin{rem}
It follows that the right adjoint functor $\sfD(\mathbf{p}) \to \sfD(E)$ is given by
\[
\RHom_\mathbf{p}(\RHom_\sfK(\mathbf{p},P),-).
\]
\end{rem}

We now prove the key fact concerning proxy-smallness.

\begin{thm}
\label{thm:loc}
Assume $P$ is proxy-small. The functor $(-)\otimes_E P\colon \sfD(E) \to \loc(P)$ is a localization. Moreover it preserves products.
\end{thm}

\begin{proof}
We verify that the functor $\RHom_\sfK(P,-)\colon \loc(P)\to \sfD(E)$ is fully faithful and right adjoint to $(-)\otimes_E P$. Our starting point is the observation that $\RHom_\sfK(P,-)$ induces an equivalence from $\thick(P)$ to $\sfD(E)^\mathrm{c}$ with inverse $(-)\otimes_E P$. In particular, $\mathbf{p} \subseteq \thick(P)$ embeds fully faithfully into $\sfD(E)$ which induces a colimit and compact preserving embedding $\lambda\colon \loc(\mathbf{p}) \to \sfD(E)$. While the localizing subcategory $\loc(P)$ must be taken in $\sfK$, we note that $\loc(\mathbf{p})$ is unambiguous here: $\mathbf{p}$ consists of objects that are compact in both $\sfD(E)$ and $\sfK$, and so it does not matter where we form the localizing subcategory $\mathbf{p}$ generates. By Brown representability $\lambda$ has a right adjoint $\rho$. Of course, this is overkill: the right adjoint is just the obvious restriction functor (up to identifying $\loc(\mathbf{p})$ and $\sfD(\mathbf{p})$).

We claim that $\rho \cong (-)\otimes_E P$. Since both of these functors preserve colimits they are determined by their value at $E$. We use the identification of $\thick(P)$, where the thick closure is taken in $\loc(\mathbf{p}),$ with $\sfD(E)^\mathrm{c}$ again. The latter contains $\lambda(\mathbf{p})$ and so
\[
\rho(E) = \RHom_{\sfD(E)}(-,E)\vert_{\lambda(\mathbf{p})} \cong \RHom_\sfK(-,P)\vert_{\mathbf{p}}.
\]
Now the representable functor on the right corresponds to $P\in \loc(P)=\loc(\mathbf{p})$ under the equivalence $\sfD(\mathbf{p}) \cong \loc(\mathbf{p})$, and we can conclude that $\rho \cong (-)\otimes_E P$. It follows that the right adjoint $\RHom_\sfK(P,-)$ of $(-)\otimes_E P$ is fully faithful, because the left adjoint $\lambda$ of $\rho$ is so, showing that the latter is a localization. Being a right adjoint $\rho \cong (-)\otimes_E P$ preserves products as claimed.
\end{proof}

\begin{rem}
The theorem gives a rather pleasant perspective on proxy-smallness: Given an object $X$ of $\sfK$ with derived endomorphism ring $E$, there is always an adjunction
\[
\begin{tikzcd}
\sfD(E) \arrow[r, shift left] & \loc(X) \arrow[l, shift left]
\end{tikzcd}
\]
where $\sfD(R) \to \loc(X)$ is given by $(-)\otimes_R X$. However, there is no reason for this to be a localization. Proxy-smallness of $X$ guarantees that it is.
\end{rem}

This alone is already interesting in a number of cases.

\begin{cor}
Let $(R,\mfm,k)$ be a commutative noetherian local ring and let $E$ denote the derived endomorphism ring of $k$. Then $\sfD(E) \to \sfD_{\{\mfm\}}(R) = \loc(k)$ is a localization.
\end{cor}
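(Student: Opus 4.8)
The plan is to deduce the corollary directly from Theorem~\ref{prop:loc}; the only thing that needs to be checked is that $P=k$ satisfies the hypotheses of that theorem, i.e.\ that the residue field $k$ is proxy-small in $\sfD(R)$ and that $\loc(k)=\sfD_{\{\mfm\}}(R)$.

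For proxy-smallness, fix a minimal generating sequence $x_1,\dots,x_c$ of $\mfm$ and let $K$ be the corresponding Koszul complex. Then $K$ is a bounded complex of finitely generated free $R$-modules, hence a perfect complex, hence compact in $\sfD(R)$. Each homology module $H_i(K)$ is annihilated by $\mfm$, so is a finite-dimensional $k$-vector space, and an induction on the total length of $H_*(K)$ using the canonical truncation triangles exhibits $K$ as an iterated cone of shifted copies of $k$; thus $K\in\thick(k)$, and altogether $K\in\thick(k)\cap\sfD(R)^\mathrm{c}$. Conversely $k\in\loc(K)$: since $R$ is noetherian, Neeman's classification of localizing subcategories of $\sfD(R)$ shows that $\loc(K)$ and $\loc(k)$ depend only on the supports of $K$ and $k$ in $\Spec R$, and both of these supports equal $\{\mfm\}$. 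Hence $k$ is proxy-small, with $\mathbf{p}=\{K\}$ as a set of proxies, and $\loc(k)=\loc(K)=\sfD_{\{\mfm\}}(R)$, the localizing subcategory of complexes with $\mfm$-power torsion homology. (Equivalently, this last identification can be read off from the \v{C}ech/stable-Koszul description of local cohomology.)

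Given these two points, Theorem~\ref{prop:loc} applies with $P=k$ and $E=\RHom_R(k,k)$: the functor $(-)\otimes_E k\colon \sfD(E)\to\loc(k)=\sfD_{\{\mfm\}}(R)$ is a localization, and in fact preserves products. This is precisely the assertion of the corollary.

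The only inputs that are not pure formalism are the two recalled in the middle paragraph --- proxy-smallness of the residue field, witnessed by the Koszul complex (cf.\ \cite{DGI_duality}), and the identification $\loc(k)=\sfD_{\{\mfm\}}(R)$ via support theory --- and both are standard, so I do not anticipate any genuine obstacle; the content of the corollary is really just the special case $P=k$ of Theorem~\ref{prop:loc}.
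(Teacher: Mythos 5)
Your argument is correct and is exactly the paper's approach, merely spelled out: the paper's proof is the single sentence that $k$ is proxy-small with the Koszul complex on $\mfm$ as a proxy, and then Theorem~\ref{prop:loc} applies. You fill in the standard verification (perfection of the Koszul complex, $K\in\thick(k)$ via its $\mfm$-torsion homology, $k\in\loc(K)$ and $\loc(k)=\sfD_{\{\mfm\}}(R)$ via support/Neeman's classification), all of which is the content the paper's remark implicitly references.
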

\begin{proof}
The residue field $k$ is always proxy-small, with proxy given by, for example, the Koszul complex on $\mfm$.
\end{proof}

\begin{ex}
Consider $R=k[x]/(x^2)$. In this case, $E = k[\theta]$ with $\theta$ in (homological) degree $-1$ and no higher homotopical structure (i.e.\ $E$ is formal). We have 
\[
\sfD^\mathrm{b}(R) = \thick(k) \subseteq \loc(k) = \sfD(R)
\]
and a fully faithful embedding of $\sfD(R)$ into $\sfK(\Inj R)$ the ind-coherent sheaves on $\Spec R$ given by taking $K$-injective resolutions. The compact objects of $\sfK(\Inj R)$ are the image of $\sfD^\mathrm{b}(R)$ under this embedding and so an injective resolution of $k$ is a compact generator. Thus there is an equivalence of $\sfK(\Inj R)$ with $\sfD(E)$ which identifies the above embedding with $\RHom(k,-)\colon \sfD(R) \to \sfD(E)$. The left adjoint to this embedding is the localization $\sfD(E) \to \sfD(R) = \sfD_{\{\mfm\}}(R)$.
\end{ex}

The remarks in the previous example extend to any artinian commutative local ring, and in fact it is more general still.

\begin{ex}
\label{ex:stablederived}
Let $R$ be a coherent ring and $G$ a classical generator for $\sfD^\mathrm{b}(\modu R)$, that is to say, $\thick(G) = \sfD^\mathrm{b}(\modu R)$.  Then $G$ is certainly proxy-small: $\thick(G)$ contains all perfect complexes and hence $\loc(G) = \sfD(R)$, since the latter is generated by $R$. 

Hence, letting $E$ denote the derived endomorphism ring of $G$, we see $\sfD(R)$ is a localization of $\sfD(E)$ and $\sfD(E)^\mathrm{c} \cong \sfD^\mathrm{b}(\modu R)$. Thus $\sfD(E) \cong \sfK(\Inj R)$ and this localization is the usual one, namely the functor inverting quasi-isomorphisms,  up to equivalence.

We note that not all rings, not even commutative ones, have classical generators.
\end{ex}

\begin{rem}
\label{rem:kinj}
Allowing $P$ to consist of a set of objects, we can take $P = \sfD^\mathrm{b}(\modu R)$ for any coherent ring $R$ (or scheme) and the result can be identified with Krause's recollement \cite{KrStab} (as extended to coherent rings by \v S\v tov\'i\v cek \cite{stovicek2014purity}) connecting $\sfK(\Inj R)$ and $\sfD(R)$ essentially for free from the formalism of proxy-smallness. We are grateful to \v S\v tov\'i\v cek for pointing this out.
\end{rem}

Using Theorem~\ref{thm:loc} we can give an efficient proof of effective constructibility as in \cite{DGI_duality}*{Theorem~4.10}. The functor $\Gamma$ is the one from \eqref{eq:gamma}.

\begin{prop}
\label{prop:effective}
There is a natural isomorphism
\[
\Gamma \cong \RHom_\sfK(P,-)\otimes_E P.
\]
\end{prop}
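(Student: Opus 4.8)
The plan is to read this off from Theorem~\ref{prop:loc} by uniqueness of adjoints; the real content is already in that theorem, and what is left is bookkeeping with composites. Without any hypothesis on $P$, the functor $(-)\otimes_E P\colon\sfD(E)\to\sfK$ is left adjoint to $\RHom_\sfK(P,-)\colon\sfK\to\sfD(E)$, and it factors as $\sfD(E)\xrightarrow{\,q\,}\loc(P)$ followed by the inclusion $i_*\colon\loc(P)\to\sfK$. Proxy-smallness of $P$ enters only through Theorem~\ref{prop:loc}, which says $q$ is a localization: its right adjoint $s=\RHom_\sfK(P,-)|_{\loc(P)}$ is fully faithful, equivalently the counit $qs\to\id_{\loc(P)}$ is an isomorphism. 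Since $\loc(P)=\loc(\mathbf{p})$, the inclusion $i_*$ here is the one in \eqref{eq:diagram}, with right adjoint $i^!$ and $\Gamma=i_*i^!\cong\RHom_\sfK(\mathbf{p},-)\otimes_\mathbf{p}\mathbf{p}$.

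The one step to carry out is to factor the right adjoint: because $(-)\otimes_E P=i_*\circ q$, uniqueness of adjoints identifies $\RHom_\sfK(P,-)\colon\sfK\to\sfD(E)$ with $s\circ i^!$, i.e.\ $\RHom_\sfK(P,X)\cong s(i^!X)$ naturally in $X$. Concretely, the colocalization counit $i_*i^!X\to X$ induces an isomorphism $\RHom_\sfK(P,i^!X)\xrightarrow{\ \sim\ }\RHom_\sfK(P,X)$ in $\sfD(E)$, since $P\in\loc(\mathbf{p})$ and $i^!$ is the coreflection onto $\loc(\mathbf{p})$. Composing with $q=(-)\otimes_E P$ then gives
\[
\RHom_\sfK(P,X)\otimes_E P\;=\;q\bigl(\RHom_\sfK(P,X)\bigr)\;\cong\;q\bigl(s(i^!X)\bigr)\;\cong\;i^!X,
\]
the last isomorphism being the counit $qs\to\id_{\loc(P)}$ from Theorem~\ref{prop:loc}. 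As $i_*$ is the inclusion of the full subcategory $\loc(P)=\loc(\mathbf{p})$ and $\Gamma=i_*i^!\cong\RHom_\sfK(\mathbf{p},-)\otimes_\mathbf{p}\mathbf{p}$, this is the asserted isomorphism $\RHom_\sfK(P,-)\otimes_E P\cong\Gamma$, natural in $X$.

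The only point needing care is that the comparison $\RHom_\sfK(P,i^!X)\cong\RHom_\sfK(P,X)$ is an isomorphism of $E$-modules, not merely of underlying spectra: this is automatic because the map is induced by a single morphism of $\sfK$, namely the counit $i_*i^!X\to X$, while the $E=\RHom_\sfK(P,P)$-action on both sides is precomposition with $\End(P)$. (Alternatively one can avoid the adjoint-factorization language and argue that the counit $\RHom_\sfK(P,X)\otimes_E P\to X$ of $(-)\otimes_E P\dashv\RHom_\sfK(P,-)$ has source in $\loc(\mathbf{p})$ and cofiber in $\loc(\mathbf{p})^\perp=\ker i^!$, so it is the colocalization map $i^!X\to X$ — but verifying the cofiber condition is tantamount to reusing Theorem~\ref{prop:loc}, so the route above is cleaner.) Beyond this there is no genuine obstacle; the statement is a formal consequence of Theorem~\ref{prop:loc}.
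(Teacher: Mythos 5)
Your proof is correct, and it takes a genuinely different route from the paper's. The paper routes through $\sfD(\mathbf{p})$: it starts from $\Gamma\cong\RHom_\sfK(\mathbf{p},-)\otimes_\mathbf{p}\mathbf{p}$, invokes Lemma~\ref{lem:tech} to rewrite the composite $\sfD(E)\to\sfD(\mathbf{p})$ as $-\otimes_E\RHom_\sfK(\mathbf{p},P)$, and reduces to checking that the induced map $\RHom_\sfK(P,X)\otimes_E\RHom_\sfK(\mathbf{p},P)\to\RHom_\sfK(\mathbf{p},X)$ is an isomorphism by a diagram chase in \eqref{eq:diagram}. You instead stay entirely with the adjunction $\sfD(E)\rightleftarrows\loc(P)\rightleftarrows\sfK$: factor $(-)\otimes_E P=i_*\circ q$, use uniqueness of adjoints to get $\RHom_\sfK(P,-)\cong s\circ i^!$, and apply the isomorphism $qs\xrightarrow{\sim}\id_{\loc(P)}$ from Theorem~\ref{prop:loc} to conclude $q\circ\RHom_\sfK(P,-)\cong i^!$, i.e.\ $\RHom_\sfK(P,-)\otimes_E P\cong\Gamma$. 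Both proofs have Theorem~\ref{prop:loc} as the essential input, but your version dispenses with Lemma~\ref{lem:tech} and the intermediate category $\sfD(\mathbf{p})$ altogether, and your attention to the $E$-module structure of the identification $\RHom_\sfK(P,i^!X)\cong\RHom_\sfK(P,X)$ addresses the one point where a careless treatment could slip; the paper's route earns its extra machinery because Lemma~\ref{lem:tech} is wanted independently and because the $\sfD(\mathbf{p})$ picture is used later for the relative setting.
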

\begin{proof}
We know, since $\mathbf{p}$ consists of compact objects, there is an isomorphism $\Gamma \cong \RHom_\sfK(\mathbf{p},-)\otimes_\mathbf{p} \mathbf{p}$. Hence, for $X\in \sfK$ we have
\[
\begin{tikzcd}
\RHom_\sfK(P,X)\otimes_E P \arrow[r, "\sim"] & \RHom_\sfK(P,X) \otimes_E \RHom_\sfK(\mathbf{p},P) \otimes_\mathbf{p} \mathbf{p} \arrow[d, "\circ \otimes_\mathbf{p} \mathbf{p}"] \\
\Gamma X & \arrow[l, "\sim"'] \RHom_\sfK(\mathbf{p},X) \otimes_\mathbf{p} \mathbf{p}
\end{tikzcd}
\]
where the top horizontal isomorphism comes from factoring $\sfD(E) \to \sfK$ via $\sfD(\mathbf{p})$ and applying Lemma~\ref{lem:tech}. So we just need to check that the vertical map is an isomorphism, and of course it suffices to show that
\[
\varepsilon\colon \RHom_\sfK(P,X) \otimes_E \RHom_\sfK(\mathbf{p},P) \to \RHom_\sfK(\mathbf{p},X)
\]
is an isomorphism. One can see this directly from chasing diagram~(\ref{eq:diagram}) using Lemma~\ref{lem:tech} and Theorem~\ref{thm:loc}. 
\end{proof}

\begin{ex}
As the reader may have already divined from the setup it is not necessarily the case that $\sfD(E)\to \sfK$ is fully faithful. In fact, it is not even necessarily conservative. 

Consider $\sfK = \sfD(\ZZ)$ and let $P$ be the injective envelope of $\FF_p$. Then $E = \RHom_\ZZ(P,P)$ is concentrated in degree $0$ and isomorphic to the $p$-adic integers $\ZZ_p$. Note that $P$ is proxy-small (e.g.\ with small proxy $\FF_p$ cf.\ Example~\ref{ex:ptorsion}). One can easily check that $\QQ_p$, the function field of $\ZZ_p$, is killed by 
\[
-\otimes_{\ZZ_p} P\colon \sfD(\ZZ_p) \longrightarrow \sfD(\ZZ)
\]
and so this functor is not fully faithful.
\end{ex}

\section{Detecting smashing localizations}

We continue the notation of the previous sections, with the caveat that we no longer assume that $P$ is necessarily proxy-small. 

If $P$ is proxy-small then the functor $\RHom_\sfK(P,-)\otimes_E P$ preserves coproducts, even though $\RHom_\sfK(P,-)$ does not if $P$ is not compact. It is then natural to ask what we can deduce about a general $P$ with the property that $\RHom_\sfK(P,-)\otimes_E P$ preserves coproducts. 

Set $F = -\otimes_E P$ and $G= \RHom_\sfK(P,-)$ for brevity, and assume that $FG$ preserves coproducts. We consider these functors, as in diagram~\eqref{eq:diagram}, as an adjunction relating $\sfD(E)$ and $\sfK$. The counit of this adjunction is denoted $\varepsilon \colon FG \to \id_{\sfK}$. The following lemma is implicit in said diagram, but let us make it explicit.

\begin{lem}\label{lem:thefirst}
The essential image of $FG$, denoted $\im FG$, is contained in $\loc(P)$.
\end{lem}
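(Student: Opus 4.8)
The plan is to show that for every object $X\in\sfK$, the object $FG(X) = \RHom_\sfK(P,X)\otimes_E P$ lies in $\loc(P)$, and then conclude that the essential image—being the collection of such objects—is contained in $\loc(P)$.

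First I would recall from the construction of diagram~\eqref{eq:diagram} that the functor $F = (-)\otimes_E P\colon \sfD(E)\to\sfK$ factors through $\loc(P)$. This is precisely because $(-)\otimes_E P$ preserves coproducts and sends the generator $E\in\sfD(E)$ to $P\in\loc(P)$; since $\sfD(E) = \loc(E)$ and $\loc(P)\subseteq\sfK$ is a localizing subcategory (hence closed under coproducts and triangles in $\sfK$), the full subcategory of $\sfD(E)$ on which $F$ lands inside $\loc(P)$ is localizing and contains $E$, so it is all of $\sfD(E)$. In particular, for any $X\in\sfK$, the object $G(X)=\RHom_\sfK(P,X)$ is some object of $\sfD(E)$, so $FG(X) = F(G(X))\in\loc(P)$.

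Since the essential image $\im FG$ consists, up to isomorphism, exactly of the objects $FG(X)$ for $X\in\sfK$, and each of these lies in $\loc(P)$, we get $\im FG\subseteq\loc(P)$ as desired.

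There is essentially no obstacle here: the statement is, as the authors say, implicit in diagram~\eqref{eq:diagram}, and the only point requiring any care is the observation that $F$ factors through $\loc(P)$—which is itself already recorded in the discussion following \eqref{eq:diagram}. It is worth noting that this argument does not use the hypothesis that $FG$ preserves coproducts, nor that $P$ is proxy-small; it holds for any $P\in\sfK$. That extra flexibility is presumably why the lemma is stated in this generality at the start of the section.
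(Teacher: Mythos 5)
Your proof is correct and follows essentially the same approach as the paper: the paper also observes that $\im FG\subseteq\im F$ and then that $\im F\subseteq\loc(P)$ because $FE=P$ and $F$ is exact and preserves coproducts (``by the usual arguments''), which is exactly the localizing-subcategory argument you spell out.
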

\begin{proof}
It is clear that $\im FG \subseteq \im F$. We have $FE = P$ and $F$ preserves coproducts and is exact so $\im F \subseteq \loc(P)$ by the usual arguments.
\end{proof}

Now let us use that $FG$ preserves coproducts.

\begin{lem}
\label{lem:thesecond}
The map $\varepsilon_{FG}\colon (FG)^2 \to FG$ is an isomorphism.
\end{lem}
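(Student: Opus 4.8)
The plan is to reduce the statement to the fact that the essential image of $FG$ consists of objects on which the counit $\varepsilon$ is already an isomorphism, and then to bootstrap this from the compact generators using the coproduct-preservation hypothesis. First I would recall from Lemma~\ref{lem:thefirst} that $\im FG \subseteq \loc(P)$, and that on $\loc(P)$ the adjunction $(F,G)$ restricts to the adjunction $\sfD(E) \rightleftarrows \loc(P)$ appearing in diagram~\eqref{eq:diagram}. The key input is that $G$ \emph{is} fully faithful when restricted to $\thick(P)$ — indeed $\RHom_\sfK(P,-)$ identifies $\thick(P)$ with $\sfD(E)^\mathrm{c}$ with inverse $F$ — so for any $Y\in\thick(P)$ the counit $\varepsilon_Y\colon FGY\to Y$ is an isomorphism. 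Equivalently, $\varepsilon_P$ is an isomorphism and hence so is $\varepsilon_{FM}$ for every compact $M\in\sfD(E)^\mathrm{c}$, since $FM\in\thick(P)$.

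Next I would extend this to all of $\sfD(E)$. The functor $M\mapsto \varepsilon_{FM}\colon (FG)FM \to FM$ is a natural transformation between two functors $\sfD(E)\to\sfK$. The target $F$ preserves coproducts by construction; the source $(FG)F$ preserves coproducts because $FG$ preserves coproducts (by hypothesis) and $F$ does. Both are exact. They agree on the compact generator $E$ of $\sfD(E)$ — this is exactly the statement $\varepsilon_{FE}=\varepsilon_P$ is an isomorphism — so by the usual localizing-subcategory argument (the full subcategory of $\sfD(E)$ on which a natural transformation of coproduct-preserving exact functors is an isomorphism is localizing and contains a generator) the transformation $\varepsilon_{FM}$ is an isomorphism for \emph{every} $M\in\sfD(E)$. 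In other words $\varepsilon_Z\colon FGZ\to Z$ is an isomorphism for every $Z\in\im F$.

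Finally I would apply this with $Z=FGX$ for arbitrary $X\in\sfK$: since $FGX\in\im F$, the previous paragraph gives that $\varepsilon_{FGX}\colon (FG)^2X = FG(FGX)\to FGX$ is an isomorphism, which is precisely the claim. I expect the only real subtlety to be the bookkeeping in the second paragraph — making sure that $(FG)F$ genuinely preserves coproducts, which relies squarely on the standing hypothesis that $FG$ does so, and that "isomorphism on a generator plus coproducts plus exactness" suffices; neither of these is hard, but the whole argument collapses without the hypothesis, as it must, since the conclusion $\varepsilon_{FG}$ being invertible is exactly what powers the smashing-localization results to follow.
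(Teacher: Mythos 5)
Your proof is correct, and its engine is the same as the paper's: bootstrap from the fact that $\varepsilon_P$ is an isomorphism, using that $FG$ preserves coproducts to propagate this across a localizing subcategory. The only structural difference is \emph{where} you run the localizing-subcategory argument. The paper works in $\sfK$: it shows the class $\sfN=\{X\in\sfK\mid \varepsilon_X\text{ iso}\}$ is localizing, contains $P$, hence contains $\loc(P)$, and then invokes Lemma~\ref{lem:thefirst} to place $\im FG$ inside $\loc(P)\subseteq\sfN$. You instead work in $\sfD(E)$, applying the same argument to the natural transformation $\varepsilon_{F(-)}\colon (FG)F\Rightarrow F$ and the generator $E$, concluding that $\varepsilon$ is an isomorphism on all of $\im F$; since $FGX=F(GX)\in\im F$ tautologically, the lemma follows. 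This buys a small simplification: you never actually need Lemma~\ref{lem:thefirst} (your first-paragraph appeal to it is superfluous), whereas the paper does. On the other hand, the paper's version establishes the slightly stronger intermediate statement that $\varepsilon$ is an isomorphism on all of $\loc(P)$, which it then re-uses in the discussion after Lemma~\ref{lem:thethird} when verifying $FG(\varepsilon_X)=\varepsilon_{FGX}$ on $\loc(P)$. Both are fine; yours is marginally more self-contained for this lemma in isolation, the paper's sets up better for what follows.
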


\begin{proof}
Since $FG$ preserves coproducts 
\[
\sfN = \{X \in \sfK \mid FG(X) \stackrel{\varepsilon}\to X \text{ is an isomorphism}\}
\]
is a localizing subcategory. On $P$ we get 
\[
\varepsilon_P \colon FG(P) = \RHom_\sfK(P,P)\otimes_{E} P \stackrel{\sim}{\to} P
\]
and so $\loc(P) \subseteq \sfN$. The statement then follows from Lemma~\ref{lem:thefirst}.
\end{proof}

Since $\sfK$ is compactly generated $\loc(P)$ is well-generated \cite[Theorem 4.4.9]{NeeTri} and so by Brown representability \cite[Theorem 8.3.3]{NeeTri} there is for all $X$ a localization triangle
\begin{equation}\label{loctriangle}
\Gamma_P X \to X \to L_P X
\end{equation}
with $\Gamma_P X $ in $\loc(P)$ and $L_P X$ in $\loc(P)^\perp$.

\begin{lem}
\label{lem:thethird}
The natural map $FG(\varepsilon)\colon (FG)^2 \to FG$ is an isomorphism.
\end{lem}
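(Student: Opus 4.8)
The plan is to deduce this formally from Lemma~\ref{lem:thesecond}, exploiting the comonad structure that the adjunction $F\dashv G$ puts on $FG$. Write $\eta\colon\id_{\sfD(E)}\to GF$ for the unit of the adjunction and $\delta=F\eta G\colon FG\to (FG)^2$ for the comultiplication of the associated comonad. The two counit axioms of this comonad say precisely that $\varepsilon_{FG}\circ\delta=\id_{FG}$ and $FG(\varepsilon)\circ\delta=\id_{FG}$, and both are immediate from the triangle identities $\varepsilon F\circ F\eta=\id_F$ and $G\varepsilon\circ\eta G=\id_G$. In other words $\delta$ is a common section of the two natural transformations $\varepsilon_{FG}$ and $FG(\varepsilon)\colon(FG)^2\to FG$.

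I would then simply invoke Lemma~\ref{lem:thesecond}, which says $\varepsilon_{FG}$ is an isomorphism. From $\varepsilon_{FG}\circ\delta=\id_{FG}$ it follows that $\delta=(\varepsilon_{FG})^{-1}$, so $\delta$ is an isomorphism; and then $FG(\varepsilon)\circ\delta=\id_{FG}$ forces $FG(\varepsilon)=\delta^{-1}$, which is an isomorphism (indeed $FG(\varepsilon)=\varepsilon_{FG}$). This is essentially the whole argument, so there is really no obstacle here: once Lemma~\ref{lem:thesecond} is in hand the statement is a one-line consequence of the comonad axioms.

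If instead one wants to stay closer to the machinery just set up, here is an alternative route. Since $FG$ preserves coproducts by hypothesis and is exact, the full subcategory $\sfM=\{X\in\sfK\mid FG(\varepsilon_X)\text{ is an isomorphism}\}$ is localizing, and it contains $P$ because $\varepsilon_P\colon FG(P)\to P$ is an isomorphism (as recorded in the proof of Lemma~\ref{lem:thesecond}); hence $\loc(P)\subseteq\sfM$. To pass from $\loc(P)$ to all of $\sfK$, fix $X\in\sfK$ and use the localization triangle $\Gamma_P X\to X\to L_P X$ of \eqref{loctriangle}: since $L_P X\in\loc(P)^\perp$ we have $G(L_P X)=\RHom_\sfK(P,L_P X)=0$, so both $FG$ and $(FG)^2$ vanish on $L_P X$, and therefore the maps $FG(\Gamma_P X)\to FG(X)$ and $(FG)^2(\Gamma_P X)\to (FG)^2(X)$ induced by $\Gamma_P X\to X$ are isomorphisms. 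Applying $FG$ to the naturality square of $\varepsilon$ along $\Gamma_P X\to X$, three of the four maps are then isomorphisms — the two just mentioned, together with $FG(\varepsilon_{\Gamma_P X})$, which is an isomorphism because $\Gamma_P X\in\loc(P)\subseteq\sfM$ — so the fourth, $FG(\varepsilon_X)$, is an isomorphism as well. In this route the only point requiring care is exactly this reduction from $\loc(P)$ to $\sfK$, which is where the orthogonality $G|_{\loc(P)^\perp}=0$ and the existence of the triangle \eqref{loctriangle} get used.
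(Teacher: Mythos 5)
Both of your arguments are correct, and the first one is genuinely different from — and in fact strictly slicker than — the paper's proof. The paper proves Lemma~\ref{lem:thethird} by the same localizing-subcategory device as Lemma~\ref{lem:thesecond}: it observes that $\sfN=\{X\mid FG(\varepsilon_X)\text{ iso}\}$ is localizing, contains $\loc(P)$ (same computation at $P$), and contains $\loc(P)^\perp$ (since $G$ vanishes there), hence is all of $\sfK$ by the localization triangle \eqref{loctriangle}; it then spends a further paragraph after the lemma arguing separately that $FG(\varepsilon)=\varepsilon_{FG}$. Your comonad argument collapses all of this: the comultiplication $\delta=F\eta G$ is a common section of $\varepsilon_{FG}$ and $FG(\varepsilon)$ by the two counit axioms, so once Lemma~\ref{lem:thesecond} makes $\varepsilon_{FG}$ invertible you get $\delta=(\varepsilon_{FG})^{-1}$, hence $FG(\varepsilon)=\delta^{-1}=\varepsilon_{FG}$ is also an isomorphism. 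This derives Lemma~\ref{lem:thethird} \emph{and} the identification $FG(\varepsilon)=\varepsilon_{FG}$ in one stroke, needing no further use of the localization triangle and no appeal to coproduct preservation beyond what already went into Lemma~\ref{lem:thesecond}. Your second, alternative route is a minor repackaging of the paper's argument — you reduce to $\loc(P)$ by applying $FG$ to the naturality square along $\Gamma_PX\to X$ and noting the horizontal maps are isomorphisms because $G$ kills $L_PX$, whereas the paper instead shows $\loc(P)^\perp\subseteq\sfN$ and invokes closure under extensions — but the content is the same. In short: no gaps, and your first argument is a genuine improvement that would streamline the paper's Section~3.
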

\begin{proof}
Since $FG$ preserves coproducts 
\[
\sfN = \{X \in \sfK \mid FG(\varepsilon)\colon(FG)^2(X) \to FG(X) \text{ is an isomorphism}\}
\]
is a localizing subcategory. The map $FG(\varepsilon)$ is an isomorphism on $P$ (in fact the same one as in Lemma~\ref{lem:thesecond}) and hence on all of $\loc(P)$. On the other hand, if $X\in \loc(P)^\perp$ then $G(X)=0$ by definition and so $FG(X)=0$. Thus $\varepsilon_X = 0$ and so $FG(\varepsilon_X)\colon  0 \to 0$ is an isomorphism and we see $\loc(P)^\perp \subseteq \sfN$. Thus $\sfN$ contains $\loc(P)$ and $\loc(P)^\perp$, from which we conclude using the localization triangle \eqref{loctriangle} that $\sf N=\sfK$.
\end{proof}

To summarize: we have shown that the coproduct preserving functor 
\[
FG = \RHom_\sfK(P,-)\otimes_E P,
\]
equipped with the counit $\varepsilon\colon FG \to \id_\sfK$, has essential image $\loc(P)$ and that $FG(\varepsilon)$ and $\varepsilon_{FG}$ are isomorphisms. Thus provided we can show that $FG(\varepsilon) = \varepsilon_{FG}$ we would see that $FG$ is an acyclization functor and $\loc(P)$ is smashing.

This is true on objects of $\loc(P)$. Indeed, the naturality square at an object $X$ for $\varepsilon$ with respect to itself
\[
\begin{tikzcd}
(FG)^2(X) \arrow[r, "FG(\varepsilon_X)"] \arrow[d, "\varepsilon_{FG(X)}"] & FG(X) \arrow[d, "\varepsilon_{X}"]    \\
FG(X) \arrow[r, "\varepsilon_{X}"] & X
\end{tikzcd}
\]
shows these two transformations agree up to $\varepsilon_{X}$. In particular, if $X\in \loc(P)$ then $\varepsilon_X$ is an isomorphism (e.g.\ by the proof of Lemma~\ref{lem:thesecond}) and so we can cancel it to deduce that $FG(\varepsilon_X) = \varepsilon_{FGX}$.

Applying $(FG)^2 \to FG$ to the localization triangle \eqref{loctriangle}, and noting that $G(L_PX)=0$, we get a pair of maps of triangles
\[
\begin{tikzcd}
FG^2(\Gamma_P X) \arrow[r] \arrow[d, shift left=2.5] \arrow[d, shift right=2, "="] & (FG)^2(X) \arrow[r] \arrow[d, shift left=1.5, "FG(\varepsilon_X)"] \arrow[d, shift right=1.5, "\varepsilon_{FG(X)}",swap] & 0  \arrow[d, shift left=1.5] \arrow[d, shift right=1.5]  \\
FG(\Gamma_p X) \arrow[r] & FG(X) \arrow[r] & 0 
\end{tikzcd}
\]
from which we see $FG(\varepsilon_X) = \varepsilon_{FGX}$. Thus we have proved the following theorem.

\begin{thm}\label{thm:smashing}
Let $P$ be an object of $\sfK$, with derived endomorphism ring $E$, such that $\RHom_\sfK(P,-)\otimes_E P$ preserves coproducts. Then $\loc(P)$ is smashing with acyclization functor $\RHom_\sfK(P,-)\otimes_E P$.\qed
\end{thm}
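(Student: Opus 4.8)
The plan is to assemble the pieces already proved in the section into the statement of Theorem~\ref{thm:smashing}. The strategy is: (1) recall from Lemma~\ref{lem:thefirst} that $\im FG \subseteq \loc(P)$, and in fact show this is an equality; (2) recall from Lemmas~\ref{lem:thesecond} and~\ref{lem:thethird} that the two natural transformations $\varepsilon_{FG}, FG(\varepsilon)\colon (FG)^2 \to FG$ are both isomorphisms; (3) establish the crucial identity $\varepsilon_{FG} = FG(\varepsilon)$; and finally (4) invoke the standard characterization of acyclization (colocalization) functors, namely that an exact coproduct-preserving idempotent comonad $(FG, \varepsilon)$ on $\sfK$ with $\varepsilon_{FG} = FG(\varepsilon)$ an isomorphism has essential image a smashing localizing subcategory, with $FG$ the associated acyclization. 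The first two steps are done; the real content of the proof is step (3).

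For step (1), the containment $\im FG \subseteq \loc(P)$ is Lemma~\ref{lem:thefirst}. For the reverse, apply $FG$ to $\varepsilon_P\colon FG(P) \xrightarrow{\sim} P$ (an isomorphism by the computation in the proof of Lemma~\ref{lem:thesecond}) to see $P \in \im FG$; since $FG$ is exact and coproduct-preserving, $\im FG$ is a localizing subcategory (this is exactly the reasoning in Lemmas~\ref{lem:thesecond}--\ref{lem:thethird}), so it contains $\loc(P)$. Hence $\im FG = \loc(P)$.

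For step (3), the identity $\varepsilon_{FG} = FG(\varepsilon)$ is verified via the localization triangle~\eqref{loctriangle}. On $\loc(P)$ the naturality square for $\varepsilon$ against itself,
\[
\begin{tikzcd}
(FG)^2(X) \arrow[r, "FG(\varepsilon_X)"] \arrow[d, "\varepsilon_{FG(X)}"'] & FG(X) \arrow[d, "\varepsilon_X"] \\
FG(X) \arrow[r, "\varepsilon_X"'] & X
\end{tikzcd}
\]
commutes, and for $X \in \loc(P)$ the right vertical map $\varepsilon_X$ is an isomorphism (as in the proof of Lemma~\ref{lem:thesecond}), so it cancels and $FG(\varepsilon_X) = \varepsilon_{FG(X)}$. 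For $X \in \loc(P)^\perp$ one has $G(X) = 0$, hence $(FG)^2(X) = FG(X) = 0$ and both transformations are trivially equal. A general $X$ fits into the triangle $\Gamma_P X \to X \to L_P X$ with $\Gamma_P X \in \loc(P)$ and $L_P X \in \loc(P)^\perp$; applying the coproduct-preserving exact functors $(FG)^2$ and $FG$ and comparing via the two natural maps yields a morphism of triangles in which the outer two components are equalities (using $FG(L_P X) = (FG)^2(L_P X) = 0$), forcing $FG(\varepsilon_X) = \varepsilon_{FG(X)}$ on the middle term. This is exactly the argument displayed before the theorem, so the proof here can simply point to that display.

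The main obstacle — and it is more bookkeeping than genuine difficulty — is step (4): packaging ``coproduct-preserving exact idempotent comonad with image $\loc(P)$'' into the assertion that $\loc(P)$ is smashing with acyclization $FG$. Concretely, one must check that $FG$ together with $\varepsilon$ realizes $\loc(P)$ as the kernel of a smashing (i.e.\ coproduct-preserving) localization: the functor $X \mapsto \cone(\varepsilon_X)$ is then the localization onto $\loc(P)^\perp$, it preserves coproducts because $FG$ and $\id$ do, and $\Gamma_P \cong FG$ because both sit in a localization triangle with the same third term and the map $FG(X) \to X$ is $\varepsilon_X$. The identities $\varepsilon_{FG} = FG(\varepsilon)$ being isomorphisms guarantee that $FG$ is genuinely idempotent (a comonad that is a colocalization), which is what makes $\cone(\varepsilon_{-})$ land in $\loc(P)^\perp$ and be a localization rather than merely a pointed endofunctor. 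Once this is spelled out the proof is complete, so the write-up is essentially: cite Lemmas~\ref{lem:thefirst}--\ref{lem:thethird}, reproduce the triangle argument for $\varepsilon_{FG} = FG(\varepsilon)$, and conclude. \qed
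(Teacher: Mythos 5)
Your proposal follows the paper's route exactly: the content is the triangle argument giving $FG(\varepsilon)=\varepsilon_{FG}$, assembled with Lemmas~\ref{lem:thefirst}--\ref{lem:thethird} and then packaged via the general theory of coproduct-preserving idempotent comonads. One small imprecision in step (1): you justify $\im FG$ being localizing by appeal to exactness and coproduct-preservation alone, but the essential image of such a functor is \emph{not} in general thick or localizing; what makes it work here is the idempotency established in Lemma~\ref{lem:thesecond}, after which $\im FG$ coincides with $\{X : \varepsilon_X \text{ is an isomorphism}\}$, and it is this description that is transparently a localizing subcategory. Even more directly, the proof of Lemma~\ref{lem:thesecond} already shows $\varepsilon_X$ is invertible for every $X\in\loc(P)$, hence $X\cong FG X\in\im FG$, so no assertion about $\im FG$ being a subcategory is needed to get $\loc(P)\subseteq\im FG$. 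With that small repair your write-up is correct and matches the paper's argument.
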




\section{Characterization of proxy-smallness}

We use Theorem~\ref{thm:smashing} to give a characterization of proxy-small objects. Before proceeding let us give a quick reminder. We have an object $P$ of $\sfK$ with derived endomorphism ring $E$. This data induces a composite functor
\[
\sfD(E) \to \loc(P) \to \sfK
\]
with a right adjoint, and we have observed $\sfD(E) \to \loc(P)$ is not necessarily an equivalence. However, it does induce an equivalence $\sfD(E)^\mathrm{c} \stackrel{\sim}{\to} \thick(P)$ between the perfect $E$-modules and the thick subcategory generated by $P$.

We have the following `boundedness' result.

\begin{lem}\label{lem:bdd}
Let $P$ be an object of $\sfK$ with derived endomorphism ring $E$. Suppose that $\RHom_\sfK(P,-)\otimes_E P$ preserves coproducts. Then $\loc(P)\cap \sfK^\mathrm{c} \subseteq \thick(P)$.
\end{lem}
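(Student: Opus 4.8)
The plan is to show that any $C\in\loc(P)\cap\sfK^\mathrm{c}$ is a retract of an object of $\thick(P)$, by exhibiting it as a filtered homotopy colimit of objects of $\thick(P)$ and then using that $C$ is compact also as an object of $\loc(P)$. Recall the adjoint functors $F=(-)\otimes_E P\colon\sfD(E)\to\sfK$ and $G=\RHom_\sfK(P,-)\colon\sfK\to\sfD(E)$, with counit $\varepsilon\colon FG\to\id_\sfK$. The first step, and the only one using the hypothesis, is to note that $C$ lies in the essential image of $F$: since $FG$ preserves coproducts, the proof of Lemma~\ref{lem:thesecond} shows that the full subcategory of $\sfK$ on which $\varepsilon$ is invertible is localizing and contains $P$, hence contains $\loc(P)$; so with $M:=G(C)=\RHom_\sfK(P,C)$ we have $\varepsilon_C\colon F(M)\xrightarrow{\ \sim\ }C$.

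Now I would use compact generation of $\sfD(E)$. Since $\sfD(E)$ is the ind-completion of $\sfD(E)^\mathrm{c}=\thick(E)$, the module $M$ is a filtered homotopy colimit $M\simeq\hocolim_j Q_j$ of perfect $E$-modules $Q_j$. As $F$ is a left adjoint it preserves colimits, so $C\simeq F(M)\simeq\hocolim_j F(Q_j)$; and, as recalled above, $F$ restricts to an equivalence $\sfD(E)^\mathrm{c}\xrightarrow{\sim}\thick(P)$, so each $F(Q_j)$ lies in $\thick(P)\subseteq\loc(P)$. Thus $C$ is a filtered homotopy colimit, formed inside $\loc(P)$, of objects of $\thick(P)$.

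It remains to split off one of the terms. The inclusion $\loc(P)\hookrightarrow\sfK$ preserves coproducts, being a localizing subcategory, so the object $C$ — compact in $\sfK$ and lying in $\loc(P)$ — is compact in $\loc(P)$; in particular the natural map $\colim_j\Hom_{\loc(P)}(C,F(Q_j))\to\Hom_{\loc(P)}(C,\hocolim_j F(Q_j))$ is an isomorphism. Tracing $\id_C$ through this isomorphism shows that $\id_C$ factors through some $F(Q_{j_0})$, exhibiting $C$ as a retract of $F(Q_{j_0})\in\thick(P)$; since $\thick(P)$ is closed under retracts, $C\in\thick(P)$, as claimed. I expect the genuine obstacle to be the first step: the essential image of the exact functor $F$ need not be a triangulated subcategory, so without the coproduct-preservation of $FG$ there is no reason a compact object of $\loc(P)$ should lie in it — once it does, the rest is formal.
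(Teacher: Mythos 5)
Your proof is correct and takes essentially the same route as the paper's: both write $\RHom_\sfK(P,C)$ as a filtered homotopy colimit of perfect $E$-modules, push forward along $(-)\otimes_E P$, and use compactness of $C$ (in $\loc(P)$, which it inherits since the inclusion preserves coproducts) to split off a term in $\thick(P)$. The only cosmetic difference is that the paper cites Theorem~\ref{thm:smashing} to justify $C\cong\RHom_\sfK(P,C)\otimes_E P$, whereas you re-derive this directly from the localizing-subcategory argument in the proof of Lemma~\ref{lem:thesecond}; both justifications are valid and equivalent in content.
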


\begin{proof} 
Let $x$ be an object of $\loc(P)\cap \sfK^\mathrm{c}$. The object $\RHom_\sfK(P,x)$ of $\sfD(E)$ can be written as a filtered homotopy colimit $\hocolim_i F_i$ where each $F_i$ lies in $\thick(E)$. The left adjoint $(-)\otimes_E P$ preserves homotopy colimits and so we get isomorphisms
\[
x \cong \RHom_\sfK(P,x)\otimes_E P \cong (\hocolim_i F_i) \otimes_E P \cong \hocolim_i (F_i \otimes_E P)
\]
the first via Theorem~\ref{thm:smashing}. Because $x$ is compact in $\loc(P)$ the isomorphism from $x$ to $\hocolim_i (F_i \otimes_E P)$ factors through some $F_i \otimes_E P$. Hence $x$ is a retract of the object $F_i \otimes_E P \in \thick(P)$ and so $x$ lies in $\thick(P)$.
\end{proof}

\begin{rem}
For $P$ as above  we know $\loc(P)$ is smashing, by Theorem~\ref{thm:smashing}, and so the inclusion functor $\loc(P)\to \sfK$ sends compacts to compacts. Thus, being compact in $\loc(P)$ is the same as being compact in $\sfK$ and we do not need to distinguish these notions.
\end{rem}

All this leads to the following characterization of proxy-smallness.

\begin{thm}
\label{thm:proxysmall}
Let $P$ be an object of $\sfK$, with derived endomorphism ring $E$. Then $P$ is proxy-small if and only if the following two conditions hold:
\begin{itemize}
\item[(i)] $\loc(P) = \loc( \loc(P)\cap \sfK^\mathrm{c})$;
\item[(ii)] $\RHom_\sfK(P,-)\otimes_E P$ preserves coproducts.
\end{itemize}
\end{thm}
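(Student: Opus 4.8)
The plan is to prove both directions using the results already established. For the forward direction, assume $P$ is proxy-small. Then by Theorem~\ref{prop:loc} (together with Proposition~\ref{prop:effective}, or just the diagram~\eqref{eq:diagram}) the functor $\RHom_\sfK(P,-)\otimes_E P$ is isomorphic to $\Gamma = i_*i^!$, which preserves coproducts since $\mathbf{p}$ consists of compacts; this gives (ii). For (i), recall that by the remarks following Definition~\ref{def_set_version} we have $\loc(P) = \loc(\mathbf{p})$ where $\mathbf{p} = \thick(P)\cap\sfK^\mathrm{c}$, and since $\mathbf{p}\subseteq \loc(P)\cap\sfK^\mathrm{c}\subseteq \loc(P)$ it follows that $\loc(P) = \loc(\mathbf{p}) \subseteq \loc(\loc(P)\cap\sfK^\mathrm{c})\subseteq \loc(P)$, forcing equality. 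So (i) holds.

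For the converse, suppose (i) and (ii) hold. I want to exhibit a set of proxies. The natural candidate is $\mathbf{p} := \loc(P)\cap\sfK^\mathrm{c}$, which is an essentially small family since it is a subcategory of the essentially small category $\sfK^\mathrm{c}$. Condition~(i) says exactly that $P\in\loc(P) = \loc(\mathbf{p})$, so the second requirement of Definition~\ref{def_set_version} is immediate. The first requirement demands $\mathbf{p}\subseteq\thick(P)$. This is precisely the content of Lemma~\ref{lem:bdd}: condition~(ii) is its hypothesis, and its conclusion is $\loc(P)\cap\sfK^\mathrm{c}\subseteq\thick(P)$, i.e.\ $\mathbf{p}\subseteq\thick(P)\cap\sfK^\mathrm{c}$. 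Hence $\mathbf{p}$ is a valid set of proxies and $P$ is proxy-small.

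I expect the proof to be short once the bookkeeping is set up correctly, so there is no single hard step; the main subtlety is making sure the family $\mathbf{p} = \loc(P)\cap\sfK^\mathrm{c}$ really is a legitimate small set of proxies, which amounts to citing essential smallness of $\sfK^\mathrm{c}$ and invoking Lemma~\ref{lem:bdd} for the containment in $\thick(P)$. A secondary point worth a sentence is that, as noted in the remark after Lemma~\ref{lem:bdd}, under hypothesis (ii) compactness in $\loc(P)$ agrees with compactness in $\sfK$, so there is no ambiguity in how we interpret $\loc(P)\cap\sfK^\mathrm{c}$; but this is not strictly needed for the argument since we are always intersecting with $\sfK^\mathrm{c}$ inside $\sfK$. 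I would write the two directions as two short paragraphs, citing Theorem~\ref{prop:loc} and Proposition~\ref{prop:effective} for the forward implication and Lemma~\ref{lem:bdd} for the reverse.
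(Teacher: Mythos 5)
Your proof is correct and follows essentially the same route as the paper: Proposition~\ref{prop:effective} for the forward direction (ii), a short unwinding of Definition~\ref{def_set_version} for (i), and Lemma~\ref{lem:bdd} plus condition (i) for the converse with $\mathbf{p}=\loc(P)\cap\sfK^{\mathrm{c}}$ as the set of proxies. The paper is slightly terser (it cites Theorem~\ref{thm:smashing} explicitly in the converse, but that is already what Lemma~\ref{lem:bdd} rests on), so there is no substantive difference.
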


\begin{proof}
If $P$ is proxy-small then (i) holds by definition and (ii) holds by effective constructibility as in Proposition~\ref{prop:effective}.

Suppose, on the other hand, the object $P$ verifies (i) and (ii). By Theorem~\ref{thm:smashing} condition (ii) guarantees that $\RHom_\sfK(P,-)\otimes_E P$ is the acyclization functor for $\loc(P)$. By Lemma~\ref{lem:bdd} if $x\in \loc(P)$ is compact in $\sfK$ then $x$ already lies in $\thick(P)$. Condition (i) then ensures there are enough such compacts.
\end{proof}

\begin{rem}
It was already shown in \cite{DGI_duality}, provided $\loc(P)$ has a compact generator, that proxy-smallness of $P$ implies (i) and (ii).
\end{rem}

\begin{rem}
In fact the localization corresponding to a proxy-small object also preserves products. This was shown in Theorem~\ref{thm:loc}.
\end{rem}

\begin{cor}\label{cor:proxysmall}
Suppose that $\sfK$ satisfies the telescope conjecture, i.e.\ every smashing subcategory of $\sfK$ is generated by the compact objects it contains. Then $P$ is proxy-small if and only if $\RHom_\sfK(P,-)\otimes_E P$ preserves coproducts.
\end{cor}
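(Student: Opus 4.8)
The plan is to deduce this immediately from Theorem~\ref{thm:proxysmall}, whose statement already packages proxy-smallness into the two conditions (i) $\loc(P) = \loc(\loc(P)\cap\sfK^\mathrm{c})$ and (ii) coproduct preservation of $\RHom_\sfK(P,-)\otimes_E P$. One implication is then free and needs no hypothesis on $\sfK$: if $P$ is proxy-small, Theorem~\ref{thm:proxysmall} (equivalently, effective constructibility as in Proposition~\ref{prop:effective}) gives condition (ii).

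For the converse I would start from (ii) and use the telescope conjecture to recover (i). The key point is that (ii) is exactly the hypothesis of Theorem~\ref{thm:smashing}, which tells us that $\loc(P)$ is a smashing subcategory of $\sfK$, with acyclization functor $\RHom_\sfK(P,-)\otimes_E P$. Now feed this into the telescope conjecture in the form assumed in the statement: every smashing subcategory of $\sfK$ is generated by the compact objects it contains. Applied to the smashing subcategory $\loc(P)$, this yields $\loc(P) = \loc(\loc(P)\cap\sfK^\mathrm{c})$, which is precisely condition~(i). Having both (i) and (ii), Theorem~\ref{thm:proxysmall} gives that $P$ is proxy-small.

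The only delicate point, and the closest thing to an obstacle, is bookkeeping rather than mathematics: one must make sure that ``the compact objects $\loc(P)$ contains'' means $\loc(P)\cap\sfK^\mathrm{c}$ (there is no ambiguity between compactness in $\loc(P)$ and in $\sfK$ here, since $\loc(P)$ is smashing, as already noted after Lemma~\ref{lem:bdd}) and that ``generated by'' means closure under $\loc(-)$, so that the conclusion of the telescope conjecture is verbatim condition~(i) of Theorem~\ref{thm:proxysmall}. With the conventions fixed at the start of Section~2 this identification is immediate, so the corollary follows with no additional work.
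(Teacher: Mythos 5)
Your argument is correct and is essentially the same as the paper's: in both cases the only content is that, given coproduct preservation, Theorem~\ref{thm:smashing} makes $\loc(P)$ smashing, so the telescope conjecture supplies condition~(i) of Theorem~\ref{thm:proxysmall}, while the forward implication is immediate from that theorem.
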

\begin{proof}
If the telescope conjecture holds for $\sfK$ then condition (ii) already implies condition (i) by Theorem~\ref{thm:smashing}.
\end{proof}

\begin{cor}\label{cor:slogan}
Let $\sfJ$ be a localizing subcategory of $\sfK$ satisfying $\sfJ = \loc(\sfJ \cap \sfK^\mathrm{c})$. If $E$ is a ring spectrum and $\pi\colon\sfD(E) \to \sfJ$ is a localization then $\pi(E)$ is proxy-small in $\sfK$. 
\end{cor}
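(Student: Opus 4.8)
The plan is to apply Theorem~\ref{thm:proxysmall} to the object $P = \pi(E)$, so we must verify conditions (i) and (ii) of that theorem. The key input is the localization $\pi\colon\sfD(E)\to\sfJ$, which has a fully faithful right adjoint; call it $\iota$.

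First I would identify $\loc(P)$ with $\sfJ$. Since $\pi$ is a localization it is essentially surjective, and $\pi(E) = P$ is a generator of $\sfD(E)$, so $\loc(P) = \loc(\pi(E))$ contains the image of $\loc(E) = \sfD(E)$ under $\pi$, which is all of $\sfJ$; conversely $P\in\sfJ$ so $\loc(P)\subseteq\sfJ$. Thus $\loc(P) = \sfJ$, and condition (i) is then exactly the hypothesis $\sfJ = \loc(\sfJ\cap\sfK^\mathrm{c})$.

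Next I would verify (ii): that $\RHom_\sfK(P,-)\otimes_E P$ preserves coproducts. The idea is to recognize this composite functor in terms of $\pi$ and $\iota$. One checks that the right adjoint of $(-)\otimes_E P\colon\sfD(E)\to\sfK$, namely $\RHom_\sfK(P,-)$, factors as $\rho\circ(\text{inclusion }\sfJ\hookrightarrow\sfK\text{ composed with acyclization})$ — more precisely, that on $\sfJ = \loc(P)$ the functor $(-)\otimes_E P$ agrees with $\pi$ (both preserve coproducts and agree on $E$, sending it to $P$), and hence its right adjoint $\RHom_\sfK(P,-)$ restricted to $\sfJ$ is $\iota$. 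Therefore $\RHom_\sfK(P,-)\otimes_E P$, which by Lemma~\ref{lem:thefirst}-type reasoning lands in $\loc(P) = \sfJ$ and factors through the acyclization $\Gamma_{\sfJ}$ of $\sfJ$ in $\sfK$, is naturally isomorphic to $\pi\circ\iota\circ\Gamma_{\sfJ} \cong \Gamma_{\sfJ}$ (using $\pi\iota\cong\id$). But the acyclization functor onto $\sfJ$ need not preserve coproducts in general — so this is where the hypothesis $\sfJ = \loc(\sfJ\cap\sfK^\mathrm{c})$ must be used again: since $\sfJ$ is generated by compact objects of $\sfK$, it is a smashing subcategory, and its acyclization functor $\Gamma_{\sfJ}$ does preserve coproducts. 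Hence (ii) holds, and Theorem~\ref{thm:proxysmall} gives that $P = \pi(E)$ is proxy-small.

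The main obstacle is the bookkeeping in the second step: carefully matching $\RHom_\sfK(P,-)\otimes_E P$ with the acyclization functor $\Gamma_{\sfJ}$ through the adjunctions, and in particular checking that the counit $(-)\otimes_E P\circ\RHom_\sfK(P,-)\to\id$ restricts correctly. An alternative, possibly cleaner route avoiding the smashing argument: show directly that $\sfD(E)^\mathrm{c}$ maps under $\pi$ onto a class of compact objects of $\sfK$ contained in $\thick(P)$, using that $\pi$ preserves compactness (as $\iota$ preserves coproducts, being itself the right adjoint of a coproduct-preserving localization under the hypothesis) together with Lemma~\ref{lem:bdd}; combined with (i) this exhibits a set of proxies directly. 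Either way the hypothesis $\sfJ = \loc(\sfJ\cap\sfK^\mathrm{c})$ is used twice — once to get enough compacts, once to get coproduct preservation — and pinning down that second usage is the delicate point.
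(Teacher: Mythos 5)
Your main argument follows essentially the same route as the paper: both verify conditions (i) and (ii) of Theorem~\ref{thm:proxysmall} by first identifying $\loc(P)=\sfJ$ (giving (i) from the hypothesis) and then recognizing $\RHom_\sfK(P,-)\otimes_E P$ as the acyclization $i_*i^!$, which preserves coproducts because $\sfJ=\loc(\sfJ\cap\sfK^\mathrm{c})$ forces $i_*$ to preserve compacts and hence $i^!$ to preserve coproducts. One caveat about your sketched "cleaner" alternative: the claim that $\iota$ preserves coproducts (equivalently that $\pi$ preserves compacts) does not follow from the hypotheses, since $\pi(E)=P$ need not be compact in $\sfJ$ (indeed Theorem~\ref{prop:loc} produces localizations $\sfD(E)\to\loc(P)$ whose right adjoints fail to preserve coproducts whenever $P$ is non-compact), so that route as stated would not go through.
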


\begin{proof}
Being a left adjoint $\pi$ preserves coproducts, so it necessarily of the form $-\otimes_E \pi(E)$, with right adjoint given by $\RHom_\sfK(\pi(E),-)$. By the assumption that $\pi$ is a localization we have a natural isomorphism $\RHom_\sfK(\pi(E),-)\otimes_E \pi(E) \cong \id_\sfJ$. Since the inclusion $i_*\colon \sfJ\to \sfK$ preserves compacts, it has a coproduct preserving right adjoint $i^!$. Then $\RHom_\sfK(\pi(E),-)\otimes_E\pi(E)  =i_*(\RHom_\sfJ(\pi(E),i^!(-))\otimes_E\pi(E))= i_*i^!$ evidently preserves coproducts, and $\pi(E)$ is proxy-small by Theorem~\ref{thm:proxysmall}.
\end{proof}

\begin{rem}
One can also prove the preceding result by checking the definition of proxy-smallness directly using that $\sfJ = \loc(\pi(E))$, because $\pi$ is a localization, and applying Lemma~\ref{lem:bdd}.
\end{rem}

\begin{rem}
Choosing a set of small proxies $\mathbf{p}$ is the additional data of fixing another presentation of $\sfJ$ as a derived category, namely $\sfJ \cong \sfD(\mathbf{p})$.
\end{rem}

\begin{slogan}
\label{slogan}
Proxy-small objects are presentations of localizing subcategories generated by compacts: giving a proxy-small object $P$ is equivalent to giving a thick subcategory $\sfR$ of the compact objects $\sfK^\mathrm{c}$, a ring spectrum $E$, and a localization $\sfD(E) \to \loc(\sfR)$.

Let us justify this. We saw already, via Theorem~\ref{thm:loc}, that given a proxy-small $P$ with derived endomorphism ring $E$ we get a  localization $\sfD(E) \to \loc(P)$ and (by definition) $\loc(P)$ is generated by objects of $\sfK^\mathrm{c}$. Going in the other direction is precisely Corollary~\ref{cor:slogan}.
\end{slogan}

Here is another consequence of Theorem~\ref{thm:proxysmall}.

\begin{cor}
Let $R$ be a commutative noetherian ring, and $\{R\to S_\lambda\}_\lambda$ a family of flat maps of rings such that the induced map $\sqcup_{\lambda} \Spec S_\lambda\to \Spec R$ is onto. For an object $P$ in $\sfD^\mathrm{b}(\modu R)$, if $S_\lambda\otimes_RP$ is proxy-small in $\sfD(S_\lambda)$ for all $\lambda$, then $P$ is proxy-small.
\end{cor}

\begin{proof}
To begin with, we claim a map $f$ in $\sfD(R)$ is an isomorphism if $S_\lambda\otimes_R f$ is an isomorphism in $\sfD(S_\lambda)$ for each $\lambda$.

Indeed, by considering the mapping cone of $f$, it suffices to verify that for an object $M\in\sfD(R)$, if $S_\lambda\otimes_R M=0$ in $\sfD(S_\lambda)$ for all $\lambda$, then $M=0$ in $\sfD(R)$.

Fix $\mfp$ in $\Spec R$ and pick an $S_\lambda$ and a $\mfq\in \Spec S_\lambda$ such that $\mfq\cap R=\mfp$. Then, with $k(\mfp)$ and $k(\mfq)$ denoting the residue fields of $R_\mfp$ and $S_\mfq$, respectively, we get isomorphisms
\[
 k(\mfq)\otimes_S (S_\lambda\otimes_RM)
    \cong k(\mfq)\otimes_R M 
        \cong k(\mfq)\otimes_{k(\mfp)}(k(\mfp)\otimes_R M)\,.
\]
Thus we obtain that $k(\mfp)\otimes_R M=0$. Since $\mfp$ was arbitrary, it follows that $M=0$, as claimed; see, for instance, \cite[Lemma~2.12]{NeeChro}.

Set $E=\RHom_{R}(P,P)$. Since $R$ is noetherian and the $R$-module $H(P)$ is finitely generated, for any flat map $R\to S$, and $M$ in $\sfD(R)$, the natural map is an isomorphism:
\[
S\otimes_R \RHom_R(P, M ) \xrightarrow{\ \cong\  } \RHom_S(S\otimes_RP, S\otimes_RM)\,.
\]
We write $L^R_P(-)$ for the functor $\RHom_R(P,-)\otimes_EP$ on $\sfD(R)$. Given the isomorphism above, for any $M$ in $\sfD(R)$ and flat map $R\to S$ one gets a natural isomorphism:
\[
S\otimes_R L^R_P(M) \longrightarrow L^S_{S\otimes_RP}(S\otimes_RM)\tag{$\ast$}\,.
\]

Now we are ready to verify that $P$ is proxy-small: Since the telescope conjecture holds for $\sfD(R)$, by \cite[Corollary~3.4]{NeeChro}, it suffices to verify that for any family $\{M_i\}$ of $R$-complexes the natural map 
\[
f\colon \bigoplus_i L^R_P(M_i) \longrightarrow L^R_P(\bigoplus_i M_i)    
\]
is an isomorphism; see Corollary~\ref{cor:proxysmall}. Given the discussion above, it suffices to verify that  
$S_\lambda\otimes_Rf$ is an isomorphism for each $\lambda$. Applying ($\ast$) one gets that
\[
S_\lambda\otimes_Rf\colon 
    \bigoplus_i L^{S_\lambda}_{S_\lambda\otimes_RP}(S_\lambda\otimes_RM_i))
         \longrightarrow L^{S_\lambda}(\bigoplus_i (S_\lambda\otimes_RM_i))\,.
\]
This is an isomorphism by Theorem~\ref{thm:proxysmall} because $S_\lambda\otimes_RP$ is proxy-small, by hypothesis.
\end{proof}

 It is illustrative to further specialise the preceding corollary; it recovers \cite[Propositions~5.3 and 5.5]{Letz:2021}. 

 \begin{cor}
 Let $R$ be a commutative noetherian ring and  $P$ in $\sfD^\mathrm{b}(\modu R)$. The following conditions are equivalent:
 \begin{itemize}
     \item[\rm(1)] $P$ is proxy-small;
     \item[\rm(2)] $P_\mfp$ is proxy-small in $\sfD(R_\mfp)$ for each $\mfp\in\Spec R$;
     \item[\rm(3)] $\widehat R\otimes_RP$ is proxy-small in $\sfD(\widehat R)$ where $\widehat R$ is the completion of $R$ with respect to an ideal in the Jacobson radical of $R$;
     \item[\rm(4)] $S\otimes_R P$ is proxy-small in $\sfD(S)$ for some faithfully flat map of rings $R\to S$. \qed
 \end{itemize} 
 \end{cor}

Returning to the general context, when $\sfK$ has a compact generator each compactly generated localizing subcategory comes with a canonical presentation. Let $\mathbf{p}$ be a set of compact objects of $\sfK$ and set 
\[
\sfJ^\mathrm{c} = \thick(\mathbf{p}) \subseteq \loc(\mathbf{p}) = \sfJ.
\]
We continue to denote by $i_*$ and $i^!$ the fully faithful inclusion $\sfJ \to \sfK$ and its right adjoint which is a localization. This gives rise to the acyclization functor $\Gamma = i_*i^!$ as before.

\begin{prop}\label{prop:thesource}
Suppose that $\sfK$ has a compact generator $g$. The object $\Gamma g$ is proxy-small. Moreover, one can take $\mathbf{p}$ as a set of small proxies.
\end{prop}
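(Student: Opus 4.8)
The plan is to verify the two conditions of Theorem~\ref{thm:proxysmall} for the object $\Gamma g$, with derived endomorphism ring $E = \RHom_\sfK(\Gamma g, \Gamma g)$. Condition (i) is essentially automatic: since $\Gamma = i_*i^!$ and $\sfJ = \loc(\mathbf{p})$ with $\mathbf{p}$ a set of compact objects of $\sfK$ (which remain compact in $\sfJ$ since $i_*$ preserves compacts), we have $\loc(\Gamma g) \subseteq \sfJ$, and conversely $\Gamma g$ generates $\sfJ$ as a localizing subcategory because $g$ generates $\sfK$ and $i^!$ preserves coproducts and detects objects of $\sfJ$. Thus $\loc(\Gamma g) = \sfJ = \loc(\mathbf{p}) = \loc(\sfJ \cap \sfK^\mathrm{c})$, giving (i). (One should be slightly careful to argue that $\loc(\Gamma g)$, formed in $\sfK$, really is all of $\sfJ$; this follows because $i^!$ is a coproduct-preserving, exact functor whose essential image is $\sfJ$ and which sends the generator $g$ to $\Gamma g$, so $\loc(\Gamma g) \supseteq \loc(i^! \sfK) = \sfJ$.)

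For condition (ii) I would argue as in Corollary~\ref{cor:slogan}. Write $E = \RHom_\sfK(\Gamma g, \Gamma g) = \RHom_\sfJ(\Gamma g, \Gamma g)$, the latter equality holding because $i_*$ is fully faithful. Since $\sfJ$ has the compact generator $\Gamma g$ (it is compact in $\sfJ$ as $i_*$ preserves compacts and $g$ is compact in $\sfK$, and it generates by the argument above), the standard compact-generator equivalence gives $\RHom_\sfJ(\Gamma g, -) \otimes_E \Gamma g \cong \id_\sfJ$. Therefore
\[
\RHom_\sfK(\Gamma g, -) \otimes_E \Gamma g \;\cong\; i_*\bigl(\RHom_\sfJ(\Gamma g, i^!(-)) \otimes_E \Gamma g\bigr) \;\cong\; i_* i^!(-) \;=\; \Gamma,
\]
using that $\RHom_\sfK(\Gamma g, X) \cong \RHom_\sfJ(\Gamma g, i^! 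X)$ by adjunction. Now $i^!$ preserves coproducts (its left adjoint $i_*$ preserves compacts, $\sfK$ being compactly generated), and $i_*$ preserves coproducts as a left adjoint, so $\Gamma = i_* i^!$ preserves coproducts. Hence $\RHom_\sfK(\Gamma g, -)\otimes_E \Gamma g$ preserves coproducts, which is condition (ii).

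Having verified (i) and (ii), Theorem~\ref{thm:proxysmall} yields that $\Gamma g$ is proxy-small. For the final clause, that $\mathbf{p}$ may be taken as a set of small proxies, I need $\mathbf{p} \subseteq \thick(\Gamma g) \cap \sfK^\mathrm{c}$ and $\Gamma g \in \loc(\mathbf{p})$; the latter is immediate from $\loc(\Gamma g) = \loc(\mathbf{p})$. For the former, each $p \in \mathbf{p}$ is compact in $\sfK$ by hypothesis, and $p \in \loc(\Gamma g) \cap \sfK^\mathrm{c}$; since condition (ii) holds, Lemma~\ref{lem:bdd} applies and gives $\loc(\Gamma g) \cap \sfK^\mathrm{c} \subseteq \thick(\Gamma g)$, so $p \in \thick(\Gamma g)$. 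Thus $\mathbf{p} \subseteq \thick(\Gamma g)\cap \sfK^\mathrm{c}$, completing the argument. I expect the main subtlety to be the bookkeeping in identifying $\RHom_\sfK(\Gamma g,-)\otimes_E \Gamma g$ with $\Gamma$ — in particular checking that the compact-generator equivalence for $\sfJ$ and the adjunction $(i_*, i^!)$ splice together compatibly, rather than any deep obstruction.
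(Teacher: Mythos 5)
Your approach routes through Theorem~\ref{thm:proxysmall}, which is genuinely different from the paper's argument; the paper instead verifies Definition~\ref{def_set_version} directly. Your check of condition (i) is fine, but your check of condition (ii) has a real gap. You claim that $\Gamma g$ is compact in $\sfJ$, ``as $i_*$ preserves compacts and $g$ is compact in $\sfK$''. That reasoning is backwards: $i_*$ preserving compacts says that objects compact in $\sfJ$ land in $\sfK^\mathrm{c}$, which tells you nothing about $i^! g$. In fact $\Gamma g$ is generally \emph{not} compact in $\sfJ$: with $\sfK = \sfD(\ZZ)$, $\sfJ = \loc(\FF_p)$ and $g = \ZZ$, the object $\Gamma g$ is a shift of the Pr\"ufer group $\ZZ[p^{-1}]/\ZZ$, which is not finitely generated and so not compact in $\sfJ$. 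Consequently the ``standard compact-generator equivalence'' does not apply, and the isomorphism $\RHom_\sfJ(\Gamma g,-)\otimes_E \Gamma g \cong \id_\sfJ$, on which your identification of $\RHom_\sfK(\Gamma g,-)\otimes_E \Gamma g$ with $\Gamma$ rests, is exactly the assertion that $(-)\otimes_E\Gamma g\colon\sfD(E)\to\sfJ$ is a localization. That is the content of Theorem~\ref{prop:loc} \emph{after} proxy-smallness of $\Gamma g$ is known, so appealing to it here is circular.

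The paper's proof sidesteps all of this and the heavier machinery of Theorem~\ref{thm:proxysmall}. It applies $\Gamma$ to the chain $\thick(g)=\sfK^\mathrm{c}\supseteq\sfJ^\mathrm{c}\supseteq\mathbf{p}$, using that $\Gamma$ is exact, coproduct- and retract-preserving, and is the identity on $\sfJ$, to obtain
\[
\thick(\Gamma g)\supseteq\Gamma\thick(g)=\Gamma(\sfK^\mathrm{c})\supseteq\Gamma(\sfJ^\mathrm{c})=\sfJ^\mathrm{c}\supseteq\mathbf{p},
\]
and then simply records that $\loc(\Gamma g)=\sfJ=\loc(\mathbf{p})$. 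This is a direct verification of the definition of proxy-smallness with set of proxies $\mathbf{p}$, and it requires nothing beyond the elementary formal properties of $\Gamma$.
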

\begin{proof}
By assumption we have $\thick(g) = \sfK^\mathrm{c} \supseteq \sfJ^\mathrm{c}$. Applying $\Gamma$ we obtain
\[
\thick(\Gamma g) \supseteq \Gamma \thick(g) = \Gamma(\sfK^\mathrm{c}) \supseteq \Gamma(\sfJ^\mathrm{c}) = \sfJ^\mathrm{c} \supseteq \mathbf{p}.
\]
By construction $\loc(\Gamma g) = \sfJ = \loc(\mathbf{p}) $ and so $\Gamma g$ is proxy-small, with set of proxies $\mathbf{p}$.
\end{proof}

\begin{rem}
Even if $\sfK$ does not have a generator we can, using the expanded definition of proxy-smallness of Remark \ref{def_set_version}, consider $\Gamma\sfK^\mathrm{c}$ with set of small proxies $\sfK^\mathrm{c}$.
\end{rem}

This gives the claimed canonical presentation which is, it seems, new as well as being rather striking.

\begin{cor}\label{cor:thesource}
Suppose that $\sfK$ has a compact generator $g$. Then $\sfJ$ is a localization of $\sfD(\RHom_\sfK(\Gamma g, \Gamma g))$.\qed
\end{cor}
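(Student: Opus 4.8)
The plan is to derive Corollary~\ref{cor:thesource} directly from Proposition~\ref{prop:thesource} together with the Theorem~\ref{prop:loc}. First I would observe that Proposition~\ref{prop:thesource} produces a proxy-small object, namely $\Gamma g$, whose localizing subcategory $\loc(\Gamma g)$ is exactly $\sfJ$ by construction (since $\loc(\Gamma g) = \sfJ = \loc(\mathbf{p})$ was noted there). Let $E = \RHom_\sfK(\Gamma g, \Gamma g)$ be its derived endomorphism ring.

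Next I would invoke Theorem~\ref{prop:loc} applied to the proxy-small object $P = \Gamma g$: it asserts precisely that the functor $(-)\otimes_E \Gamma g \colon \sfD(E) \to \loc(\Gamma g)$ is a localization (and moreover preserves products, though we do not need that here). Since $\loc(\Gamma g) = \sfJ$, this is exactly the assertion that $\sfJ$ is a localization of $\sfD(E) = \sfD(\RHom_\sfK(\Gamma g, \Gamma g))$, which is what we wanted. So the proof is essentially a two-line chaining of the preceding results, with no genuine obstacle — everything hard has already been done in establishing proxy-smallness of $\Gamma g$ and in Theorem~\ref{prop:loc}.

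If I wanted to be slightly more careful, the only point worth checking is that "localization" in the conclusion matches the usage fixed in the paper (left Bousfield localization, i.e.\ a functor with fully faithful right adjoint), which is exactly what Theorem~\ref{prop:loc} delivers. One could alternatively route through the Slogan~\ref{slogan} / Corollary~\ref{cor:slogan} circle of ideas, but that is circular-feeling and unnecessary; the cleanest path is Proposition~\ref{prop:thesource} followed by Theorem~\ref{prop:loc}. Since the statement is already marked \qed in the excerpt, the expected write-up is simply: "Combine Proposition~\ref{prop:thesource} with Theorem~\ref{prop:loc}."
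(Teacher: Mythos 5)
Your proof is correct and follows exactly the route the paper intends: the \qed on the corollary signals that it is the immediate consequence of Proposition~\ref{prop:thesource} (which gives $\Gamma g$ proxy-small with $\loc(\Gamma g) = \sfJ$) combined with Theorem~\ref{prop:loc} (which supplies the localization $\sfD(E) \to \loc(\Gamma g)$ for any proxy-small object). Your side remark about matching the paper's convention for ``localization'' is also well taken and is indeed exactly what Theorem~\ref{prop:loc} provides.
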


It is interesting to understand this phenomenon more thoroughly. In analogy with the situation in algebraic geometry one should perhaps think of this as some form of completion.



\section{The relative setting}\label{sec:relative}

In this section we discuss a version of proxy-smallness in the setting of a closed monoidal triangulated category acting on another triangulated category. Our main motivation is to capture phenomena in algebraic geometry and representation theory where typically one needs additional closure conditions, coming from the action, to realize the connection between supports and thick subcategories.

Let $\sfT$ be a monoidal $\aleph_0$-presentable stable $\infty$-category with an associative and unital left action $\sfT \times \sfK \stackrel{\ast}{\to} \sfK$ on the $\aleph_0$-presentable stable $\infty$-category $\sfK$. We assume that the monoidal structure and action preserve colimits in each variable. In particular, at the level of homotopy categories we have an action of a monoidal triangulated category in the sense of \cite{StevensonAction} (except without the insistence on symmetry of the monoidal structure), and as before it is sufficient for the reader to just have this in mind. A prototypical example is the action of $\sfT$ on itself via the monoidal structure.

Given a full subcategory $\sfA$ of $\sfK$ we denote by $\loc^\ast(\sfA)$ and $\thick^\ast(\sfA)$ the smallest localizing $\sfT$-submodule of $\sfK$ containing $\sfA$ and the smallest thick $\sfT^\mathrm{c}$-submodule of $\sfK$ containing $\sfA$ respectively. These are given by closing under the action, e.g.\ taking $\{X\ast A \mid X\in \sfT, A\in \sfA\}$ and then taking the smallest localizing subcategory containing these objects gives $\loc^\ast(\sfA)$.

\begin{defn}\label{defn:relative}
An object $P\in \sfK$ is a \emph{$\ast$-proxy-small object with small set of proxies} $\mathbf{p}$ if 
\[
\mathbf{p} \subseteq \sfK^\mathrm{c} \cap \thick^\ast(P) \text{ and } P\in \loc^\ast(\mathbf{p}).
\]
As before, we will often just say that $P$ is $\ast$-proxy-small without fixing a set of proxies.
\end{defn}

\begin{rem}
The above definition is equivalent to asking that $\sfT^\mathrm{c}\ast P = \{x\ast P \mid x\in \sfT^\mathrm{c}\}$ is proxy-small in the absolute sense.
\end{rem}

\begin{rem}
As in the absolute setting it makes perfect sense to replace the object $P$ by a full subcategory and this only results in cosmetic changes.
\end{rem}

This again gives rise to presentations of $\loc^\ast(P)$, that are now $\sfT$-linear in an appropriate sense. Recall that $\sfK$ satisfies the $\ast$-telescope conjecture if every smashing $\ast$-submodule of $\sfK$ is generated by objects of $\sfK^\mathrm{c}$.

\begin{thm}\label{thm:relative}
Let $P$ be an object of $\sfK$, let $\sfG$ be a set of compact generators for $\sfT$, and set $\sfG\ast P = \{g\ast P \mid g\in \sfG\}$. Let us moreover assume that $\sfK$ satisfies the $\ast$-telescope conjecture. Then the following statements are equivalent:
\begin{itemize}
\item[(i)] $P$ is $\ast$-proxy-small;
\item[(ii)] the functor $\RHom_\sfK(\sfG\ast P,-)\otimes_{\sfG\ast P} \sfG\ast P$ preserves coproducts;
\item[(iii)] there is a product preserving $\sfT$-linear localization $\sfD(\sfG\ast P) \to \loc^\ast(P)$ sending $\RHom_\sfK(-,P)$ to $P$.
\end{itemize}
\end{thm}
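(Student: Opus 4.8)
The plan is to bootstrap the relative statement from the absolute theory already developed, via the observation recorded right after Definition~\ref{defn:relative}: $P$ is $\ast$-proxy-small if and only if the set of objects $\sfT^\mathrm{c} \ast P$ is proxy-small in the absolute sense. Since $\sfG$ compactly generates $\sfT$, we have $\thick(\sfG \ast P) \supseteq \sfT^\mathrm{c} \ast P$ only after taking the thick $\sfT^\mathrm{c}$-submodule, but crucially $\loc(\sfG\ast P) = \loc^\ast(P)$ and, more to the point, $\sfG\ast P$ already has the same derived endomorphism ring-category as $\sfT^\mathrm{c}\ast P$ up to Morita equivalence; so one reduces to checking that the functor in (ii), formed with $\sfG\ast P$, is the effective-constructibility functor for $\loc^\ast(P)$. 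First I would set $\mathbf{q} = \sfG\ast P$, let $E = \RHom_\sfK(\mathbf{q},\mathbf{q})$ be the associated ring spectrum (a small dg-category, or its additive closure), and note $\loc(\mathbf{q}) = \loc^\ast(P)$ since the action preserves colimits and $\sfG$ generates $\sfT$.

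For (i)$\Rightarrow$(ii): if $P$ is $\ast$-proxy-small, then (using the remark after Definition~\ref{defn:relative}) the subcategory $\mathbf{q}$ is proxy-small in the absolute sense — indeed $\mathbf{q}$ consists of objects of $\thick^\ast(P)$, and one checks $\thick^\ast(P) \cap \sfK^\mathrm{c}$ is a set of proxies for $\mathbf{q}$, because $\sfT^\mathrm{c}\ast P$ sits inside $\thick^\ast(P)$ and is proxy-small. Then Proposition~\ref{prop:effective}, applied with the subcategory-version of proxy-smallness, gives that $\RHom_\sfK(\mathbf{q},-)\otimes_{\mathbf{q}}\mathbf{q}$ is naturally the acyclization functor $\Gamma = i_*i^!$ for $\loc^\ast(P)$, which preserves coproducts since the inclusion $\loc^\ast(P)\to\sfK$ preserves compacts (the telescope hypothesis is not even needed here). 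For (ii)$\Rightarrow$(i): Theorem~\ref{thm:proxysmall} applied to the subcategory $\mathbf{q}$ says $\mathbf{q}$ is proxy-small provided (ii) holds and $\loc(\mathbf{q}) = \loc(\loc(\mathbf{q})\cap\sfK^\mathrm{c})$; the latter is exactly where the $\ast$-telescope conjecture enters, via Corollary~\ref{cor:proxysmall} and Theorem~\ref{thm:smashing}, since $\loc(\mathbf{q}) = \loc^\ast(P)$ is a smashing $\ast$-submodule once (ii) holds. Having shown $\mathbf{q}$ proxy-small, one deduces $P$ is $\ast$-proxy-small: $P\in\loc(\mathbf{q})\cap\ \text{(the $\ast$-submodule generated by $P$)}$, and the proxies for $\mathbf{q}$ can be taken inside $\thick^\ast(P)$ because each element of $\sfK^\mathrm{c}\cap\thick(\mathbf{q})$ lands in $\thick(\mathbf{q}) = \thick^\ast(P)\cap(\text{small part})$ — here I would use Lemma~\ref{lem:bdd} to place these compacts in $\thick(\mathbf{q}) \subseteq \thick^\ast(P)$.

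For the equivalence with (iii): given (i), Corollary~\ref{cor:slogan} (in its subcategory form) yields a localization $\sfD(E) \to \loc^\ast(P)$ sending $\RHom_\sfK(-,P)$ to $P$, and it preserves products by the argument of Theorem~\ref{prop:loc} (the right adjoint to a fully faithful localization inclusion). The new content is $\sfT$-linearity: the action of $\sfT$ on $\sfK$ restricts to $\loc^\ast(P)$, and since $\mathbf{q} = \sfG\ast P$ is stable under the $\sfT^\mathrm{c}$-action up to thick closure, $E = \RHom_\sfK(\mathbf{q},\mathbf{q})$ inherits a $\sfT$-module structure making $\sfD(E)$ a $\sfT$-module category and the localization $\sfT$-linear; this is a formal consequence of the universal property of ind-completion together with the fact that all functors in sight preserve colimits. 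Conversely (iii)$\Rightarrow$(i) follows from Corollary~\ref{cor:slogan}: the localization exhibits $\loc^\ast(P) = \loc(\pi(E))$ with $\pi(E) \simeq P$ (since $\pi$ sends $\RHom_\sfK(-,P)$ to $P$), and $\sfT$-linearity forces $\loc^\ast(P) = \loc(\pi(E))$ as $\ast$-submodules, whence $P$ is $\ast$-proxy-small, again invoking Lemma~\ref{lem:bdd} to produce the required small proxies inside $\thick^\ast(P)$.

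I expect the main obstacle to be bookkeeping the $\sfT$-linear structure on $\sfD(E)$ and verifying that the localization $\sfD(E)\to\loc^\ast(P)$ is genuinely $\sfT$-linear rather than merely linear over the endomorphism spectrum — i.e. checking compatibility of the $\sfT$-action with the tensor-hom adjunction through the ind-completion equivalence $\sfD(\mathbf{q}) \simeq \loc^\ast(P)$. The triangulated-category manipulations are all routine given the absolute results; the careful point is that the action $\sfT \times \sfK \to \sfK$ descends correctly and that $\sfG\ast P$ really does see the whole $\ast$-submodule structure, for which the hypothesis that $\sfG$ compactly generates $\sfT$ and that the action preserves colimits in each variable is exactly what is needed.
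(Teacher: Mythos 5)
Your plan takes essentially the same route as the paper: reduce to absolute proxy-smallness of the set $\sfG\ast P$ by identifying the relevant thick and localizing subcategories, apply the absolute results (Theorem~\ref{prop:loc}, Corollary~\ref{cor:proxysmall}, Corollary~\ref{cor:slogan}), and bookkeep the $\sfT$-linear structure on $\sfD(\sfG\ast P)$ via the $\sfT^\mathrm{c}$-action on compacts and ind-completion. One factual slip: you assert that $\thick(\sfG\ast P)$ only contains $\sfT^\mathrm{c}\ast P$ ``after taking the thick $\sfT^\mathrm{c}$-submodule,'' but in fact the equality $\thick(\sfG\ast P) = \thick^\ast(P)$ holds on the nose once $\thick(\sfG) = \sfT^\mathrm{c}$ (this is \cite{StevensonAction}*{Lemma~3.11}, which is precisely what the paper invokes); this equality is what underlies the Morita equivalence of endomorphism categories you go on to use, so your argument is not actually affected, but the stated reason for it is wrong. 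A second, minor, mis-citation: for (i)$\Rightarrow$(iii) the localization is produced by Theorem~\ref{prop:loc}, not by Corollary~\ref{cor:slogan} (which runs the other way and is used for (iii)$\Rightarrow$(i)), and $\pi$ does not send a single object ``$E$'' to $P$; rather $\sfT$-linearity forces $\pi$ to send each representable $\RHom_\sfK(-,g\ast P)$ to $g\ast P$, which is what lets you apply Corollary~\ref{cor:slogan} to the subcategory $\sfG\ast P$.
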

\begin{proof}
The equivalence of (i) and (ii) is a direct application of Corollary~\ref{cor:proxysmall}.

Now let us show that (i) and (iii) are also equivalent. Let $P$ be an $\ast$-proxy-small object of $\sfK$ and fix $\sfG$ as in the statement. Because $\thick(\sfG) = \sfT^\mathrm{c}$ we know $\thick(\sfG\ast P) = \thick^\ast(P)$ (cf.\ \cite{StevensonAction}*{Lemma~3.11}) and so $\sfG\ast P$ is proxy-small in the absolute sense. Theorem~\ref{thm:loc} then furnishes us with the required localization and it only remains to construct the action of $\sfT$ on $\sfD(\sfG\ast P)$ and check linearity. 

The compact objects of $\sfD(\sfG\ast P)$ are equivalent to $\thick(\sfG\ast P) = \thick^\ast(P)$ and so are canonically endowed with an action of $\sfT^\mathrm{c}$. Passing to ind-completions gives the desired action of $\sfT$ on $\sfD(\sfG\ast P)$ and linearity of the localization $\sfD(\sfG\ast P) \to \loc^\ast(P)$ then follows from the fact it is linear on generators and everything in sight preserving colimits.

Finally, suppose we are given a localization $\pi\colon \sfD(\sfG\ast P) \to \loc^\ast(P)$ as in (iii) of the statement. Then by $\sfT$-linearity $\RHom_\sfK(-,g\ast P)$ is sent  to $g\ast P$, and by Corollary~\ref{cor:slogan} the subcategory $\sfG\ast P$ of $\sfK$ is proxy-small. Using again that $\thick(\sfG\ast P) = \thick^\ast(P)$ we see $P$ is $\ast$-proxy-small.
\end{proof}

\begin{rem}
The canonical choice is $\sfG = \sfT^\mathrm{c}$. Different choices satisfying $\sfG \subseteq \sfG'$ give presentations, as in (iii),  related by $\sfT$-linear localizations.
\end{rem}

\begin{ex}\label{ex:nonaffineci}
Let $T$ be a separated noetherian regular scheme of finite Krull dimension, $\mcE$ a vector bundle on $T$, $t\in \Gamma(T,\mcE)$ a regular section, and let $X$ be the zero scheme of $t$. We take $\sfT = \sfK = \sfD(\QCoh X)$ the unbounded derived category of quasi-coherent sheaves which acts on itself via the derived tensor product. Then \cite{StevensonDuality}*{Lemma~4.1} asserts that every object $\mcF \in \sfD^\mathrm{b}(\Coh X)$ is $\otimes$-proxy-small.
\end{ex}

\begin{ex}\label{ex:nonpnilpotent}
Let $G$ be a finite group and let $k$ be a field whose characteristic divides the order of $G$. We take $\sfT = \sfK = \sfD(kG)$ acting on itself via the tensor product. Then every object of $\sfD^\mathrm{b}(\modu kG)$ is $\otimes$-proxy-small.
\end{ex}



\section{Closure conditions}

We give a brief discussion of the closure conditions the proxy-small objects satisfy. They are rather different than what one generally hopes for in the stable setting. We begin with the observation that proxy-smallness is preserved under taking coproducts.

\begin{lem}
\label{lem:coprods}
The class of proxy-small objects is closed under suspensions and arbitrary co-products.
\end{lem}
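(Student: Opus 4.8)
The plan is to verify the two defining conditions of Definition~\ref{def_set_version} directly for a coproduct $\coprod_{\alpha} P_\alpha$ of proxy-small objects, using for each $P_\alpha$ a chosen set of proxies $\mathbf{p}_\alpha \subseteq \thick(P_\alpha)\cap\sfK^\mathrm{c}$. Closure under suspensions is immediate since $\thick(\Sigma P) = \thick(P)$, $\loc(\Sigma \mathbf{p}) = \loc(\mathbf{p})$, and $\Sigma\mathbf{p}$ still consists of compacts; so the content is the coproduct statement. First I would observe that $\bigcup_\alpha \mathbf{p}_\alpha$ is again a \emph{small} set (a small union of small sets), and each $\mathbf{p}_\alpha \subseteq \thick(P_\alpha) \subseteq \thick(\coprod_\beta P_\beta)$, because each $P_\alpha$ is a retract of the coproduct via the canonical split inclusion and projection, hence lies in $\thick(\coprod_\beta P_\beta)$; so $\bigcup_\alpha \mathbf{p}_\alpha \subseteq \thick(\coprod_\alpha P_\alpha)\cap\sfK^\mathrm{c}$, which takes care of the first condition.

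For the second condition I must show $\coprod_\alpha P_\alpha \in \loc\big(\bigcup_\alpha \mathbf{p}_\alpha\big)$. Here I would use that $P_\alpha \in \loc(\mathbf{p}_\alpha) \subseteq \loc\big(\bigcup_\beta \mathbf{p}_\beta\big)$ for each $\alpha$, and that a localizing subcategory is by definition closed under arbitrary coproducts; therefore $\coprod_\alpha P_\alpha \in \loc\big(\bigcup_\alpha \mathbf{p}_\alpha\big)$. Combining the two points, $\coprod_\alpha P_\alpha$ is proxy-small with set of proxies $\bigcup_\alpha \mathbf{p}_\alpha$.

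I do not expect any serious obstacle: the argument is essentially a bookkeeping exercise about thick and localizing closures, the only mild point being to confirm that a retract of an object in $\thick(-)$ stays in $\thick(-)$ (true, since thick subcategories are closed under summands) and that the indexing set of proxies remains genuinely a set rather than a proper class — which holds since we take a small coproduct and each $\mathbf{p}_\alpha$ is small. If one prefers the single-object formulation, one can alternatively note that $\thick(P)\cap\sfK^\mathrm{c}$ is the canonical set of proxies and that $\thick\big(\coprod_\alpha P_\alpha\big)\cap\sfK^\mathrm{c} \supseteq \bigcup_\alpha\big(\thick(P_\alpha)\cap\sfK^\mathrm{c}\big)$, giving the inclusion needed for the first condition without making any choices.
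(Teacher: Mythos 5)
Your proof is correct and takes essentially the same route as the paper: choose proxy sets $\mathbf{p}_\alpha$ for each summand, observe the union sits in $\thick(\coprod_\alpha P_\alpha)\cap\sfK^\mathrm{c}$ because each $P_\alpha$ is a retract of the coproduct, and use closure of $\loc(\bigcup_\alpha\mathbf{p}_\alpha)$ under coproducts to recover $\coprod_\alpha P_\alpha$. If anything yours is slightly cleaner, since you work with an arbitrary choice of $\mathbf{p}_\alpha\subseteq\thick(P_\alpha)\cap\sfK^\mathrm{c}$ rather than the paper's specific (and slightly awkwardly phrased) canonical choice.
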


\begin{proof}
It is clear that a suspension of a proxy-small object is proxy-small. 

Let $\{P_\lambda\mid \lambda \in \Lambda\}$ be a set of proxy-small objects. Set $P = \oplus_\lambda P_\lambda$ and
$\mathbf{p}_\lambda = \loc(P_\lambda)\cap \sfK^\mathrm{c}$.  By virtue of being summands of $P$ each $P_\lambda$ is in $\thick(P)$ and hence $\mathbf{p}_\lambda \subseteq \thick(P)$ for each $\lambda$. On the other hand, $\loc(P) = \loc(P_\lambda \mid \lambda \in \Lambda)$ and $\loc(P_\lambda) = \loc(\mathbf{p}_\lambda)$ so
\[
\loc(P_\lambda \mid \lambda \in \Lambda) \subseteq \loc(\cup_\lambda \mathbf{p}_\lambda) \subseteq \loc(P).
\]
Thus $\loc(P) = \loc(\cup_\lambda \mathbf{p}_\lambda)$. We already saw $\cup_\lambda \mathbf{p}_\lambda \subseteq \thick(P)$ and so $P$ is proxy-small.
\end{proof}

However, closure under cones is rather unusual.

\begin{lem}
\label{lem:cones}
Proxy-smallness is preserved under taking cones if and only if every object is proxy-small. In particular, this implies every localizing subcategory of $\sfK$ is generated by objects of $\sfK^\mathrm{c}$.
\end{lem}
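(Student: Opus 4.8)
The plan is to prove both directions of the equivalence, the nontrivial one being that closure under cones forces every object to be proxy-small. The easy direction is immediate: if every object of $\sfK$ is proxy-small then in particular cones of maps between proxy-small objects are proxy-small, so the class is trivially closed under cones.

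For the converse, suppose the proxy-small objects are closed under cones. I would first observe that the proxy-small objects already contain $\sfK^\mathrm{c}$ (compacts are proxy-small, with $\mathbf{p}=\{P\}$) and, by Lemma~\ref{lem:coprods}, are closed under suspensions and arbitrary coproducts. Adding closure under cones, the class of proxy-small objects is then a triangulated subcategory closed under coproducts and containing $\sfK^\mathrm{c}$. Since $\sfK$ is compactly generated, $\sfK=\loc(\sfK^\mathrm{c})$, and any localizing subcategory containing $\sfK^\mathrm{c}$ must be all of $\sfK$. Hence every object of $\sfK$ is proxy-small. The one point to check carefully is that the proxy-small objects are closed under \emph{desuspension} as well, so as to genuinely form a triangulated subcategory: this follows because a desuspension $\Sigma^{-1}P$ sits in a triangle $P\to 0\to \Sigma^{-1}P\to \Sigma P$ wait---more cleanly, $\Sigma^{-1}P$ is the cone of $0\to P$ shifted, or simply note that $\Sigma$ is an autoequivalence and apply Lemma~\ref{lem:coprods} together with the two-out-of-three property of the triangle $\Sigma^{-1}P \to 0 \to P$. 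Either way, closure under cones plus closure under suspensions upgrades to closure under all of the triangulated structure.

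For the final assertion, suppose every object of $\sfK$ is proxy-small and let $\sfL$ be a localizing subcategory. Pick any object $X\in\sfL$; since $X$ is proxy-small, by definition $X\in\loc(\thick(X)\cap\sfK^\mathrm{c})\subseteq \loc(\sfL\cap\sfK^\mathrm{c})$, using that $\thick(X)\subseteq\sfL$. As $X\in\sfL$ was arbitrary, $\sfL\subseteq\loc(\sfL\cap\sfK^\mathrm{c})$, and the reverse inclusion is clear, so $\sfL=\loc(\sfL\cap\sfK^\mathrm{c})$ is generated by the compact objects it contains.

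The main obstacle, such as it is, is purely bookkeeping: making sure that "closed under cones" combined with Lemma~\ref{lem:coprods} really does yield a localizing subcategory (in particular handling the shift issue above), after which the result is forced by compact generation. There is no hard analytic or homotopical input here---the content of the lemma is that this closure condition is secretly extremely restrictive.
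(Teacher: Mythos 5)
Your argument is correct and matches the paper's: closure under cones together with Lemma~\ref{lem:coprods} makes the proxy-small objects a localizing subcategory, which contains $\sfK^\mathrm{c}$ and therefore must be all of $\sfK$, and the final assertion then follows by unwinding the definition as you do. The brief hand-wringing about desuspensions can be dispensed with more cleanly: $\thick(\Sigma^{-1}P)=\thick(P)$ and $\loc(\Sigma^{-1}P)=\loc(P)$, so $\Sigma^{-1}P$ is proxy-small with exactly the same set of proxies as $P$; this is what ``closed under suspensions'' in Lemma~\ref{lem:coprods} already means, and it needs no appeal to cones or two-out-of-three.
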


\begin{proof}
Since the full subcategory of proxy-small objects is closed under suspensions and arbitrary coproducts, it is closed under cones precisely if it is a localizing subcategory. Since the objects of $\sfK^\mathrm{c}$ are proxy-small this occurs if only if every object is proxy-small.

Suppose then that this is the case. If $\sfJ$ is a localizing subcategory then we have
\[
\sfJ = \loc(\loc(X)\mid X\in \sfJ) = \loc(\loc(X)\cap\sfK^\mathrm{c} \mid X\in \sfJ) = \loc(\sfJ \cap \sfK^\mathrm{c}),
\]
i.e.\ $\sfJ$ is generated by the compact objects it contains.
\end{proof}

\begin{ex}\label{ex:ptorsion}
Consider $\sfK = \sfD_{\{p\}}(\ZZ) = \loc(\FF_p) \subseteq \sfD(\ZZ)$. We claim that every object of $\sfK$ is proxy-small. Indeed, if $X\neq 0$ is an object of $\sfK$ then there is a triangle
\[
\begin{tikzcd}
X \arrow[r,"p"] & X \arrow[r] & X\otimes \FF_p \arrow[r] & \Sigma X
\end{tikzcd}
\]
where $X\otimes \FF_p$ is a non-zero sum of suspensions of $\FF_p$. Thus $\FF_p \in \thick(X)$ and of course $X \in \loc(\FF_p) = \sfK$.
\end{ex}

\begin{ex}\label{ex:generaltorsion}
The above example generalizes to any non-singular point on a noetherian scheme, i.e.\ if $(R,\mfm,k)$ is a regular local ring then every object of $\sfD_{\{\mfm\}}(R) = \loc(k)$ is proxy-small. One repeats the same argument using that the Koszul complex $K(\mfm)$ is isomorphic to $k$ and that $X\otimes K(\mfm) \in \thick(X)$ is both non-zero (if $X\neq 0$) and a sum of suspensions of $k$.
\end{ex}

\begin{ex}
For a local noetherian ring $(R,\mfm,k)$ it need not be the case that every object of $\loc(k)$ is proxy-small. Indeed, if $R$ is artinian then $\loc(k) = \sfD(R)$. If $R$ is not complete intersection then by \cite{pollitzci} there is some $X\in \sfD^\mathrm{b}(R)$ which is \emph{not} proxy-small.
\end{ex}

In general, the proxy-small objects are also not closed under taking summands.

\begin{ex}\label{ex:nosummands}
Consider $\sfK = \sfD(\ZZ)$. The object $\QQ$ is not proxy-small: 
\[
\thick(\QQ) = \{\oplus_{i=1}^m \Sigma^{a_i} \QQ^{b_i} \mid a_i \in \ZZ, b_i \in \NN\}
\]
has no non-zero compact object. However $\ZZ\oplus \QQ$ is proxy-small with proxy $\ZZ$.
\end{ex}

Since the proxy-small condition depends only on the thick subcategory generated by an object, it is natural to consider proxy-small thick subcategories, as in Remark \ref{def_set_version}.

\begin{prop}
    The collection of proxy-small thick subcategories in $\sfK$, ordered by inclusion, has all least upper bounds.
\end{prop}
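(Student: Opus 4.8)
The plan is to show that the least upper bound of a family $\{\sfR_\lambda\}_{\lambda\in\Lambda}$ of proxy-small thick subcategories is simply the thick subcategory $\sfS:=\thick(\bigcup_{\lambda}\sfR_\lambda)$ they generate; the only real content is that this $\sfS$ is again proxy-small.

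I would first put the proxy-small condition into a convenient form. Using the set-of-objects version of Definition~\ref{def_set_version} together with the fact that $\thick(-)\cap\sfK^\mathrm{c}$ is always a valid (and canonical) choice of proxies, a thick subcategory $\sfR$ is proxy-small if and only if $\sfR\subseteq\loc(\sfR\cap\sfK^\mathrm{c})$: given a set of proxies $\mathbf{p}\subseteq\sfR\cap\sfK^\mathrm{c}$ with $\sfR\subseteq\loc(\mathbf{p})$ one has $\loc(\mathbf{p})\subseteq\loc(\sfR\cap\sfK^\mathrm{c})$, and conversely $\sfR\cap\sfK^\mathrm{c}$ itself (or a small skeleton thereof) serves as a set of proxies. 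So for each $\lambda$, writing $\sfT_\lambda:=\sfR_\lambda\cap\sfK^\mathrm{c}$ — a thick subcategory of the essentially small category $\sfK^\mathrm{c}$ — proxy-smallness of $\sfR_\lambda$ means $\sfR_\lambda\subseteq\loc(\sfT_\lambda)$.

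The key step is to produce one set of proxies that works for all of $\sfS$ at once. I would take $\sfT:=\thick(\bigcup_\lambda\sfT_\lambda)$, which is still a thick subcategory of $\sfK^\mathrm{c}$, hence essentially small — here it is important, but automatic, that the construction takes place inside the fixed essentially small category $\sfK^\mathrm{c}$, so the cardinality of $\Lambda$ never intervenes. Then two containments finish the proof. On one hand $\sfT\subseteq\sfS\cap\sfK^\mathrm{c}$, since $\sfT_\lambda\subseteq\sfR_\lambda\subseteq\sfS$ for all $\lambda$ and $\sfT\subseteq\sfK^\mathrm{c}$ by construction. On the other hand $\sfS\subseteq\loc(\sfT)$: for each $\lambda$ we have $\sfR_\lambda\subseteq\loc(\sfT_\lambda)\subseteq\loc(\sfT)$, so $\bigcup_\lambda\sfR_\lambda\subseteq\loc(\sfT)$, and as $\loc(\sfT)$ is in particular a thick subcategory this yields $\sfS=\thick(\bigcup_\lambda\sfR_\lambda)\subseteq\loc(\sfT)$. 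Together these say exactly that $\sfS$ is proxy-small, with $\sfT$ as a set of proxies.

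Finally $\sfS$ is by construction the smallest thick subcategory of $\sfK$ containing every $\sfR_\lambda$, so a fortiori it is the smallest proxy-small thick subcategory containing them all, i.e.\ their least upper bound in the poset under consideration; since $\sfK=\loc(\sfK^\mathrm{c})$ is itself a proxy-small thick subcategory, this poset in fact has a top element and is a complete lattice. I do not foresee a serious obstacle: the single point requiring care is the set-theoretic bookkeeping ensuring $\bigcup_\lambda\sfT_\lambda$ stays essentially small, which is immediate. The conceptual remark worth making — in contrast with the failure of closure under cones in Lemma~\ref{lem:cones} — is that passing to the generated thick subcategory $\sfS$ does not create any compact objects beyond the thick closure of those already present in the $\sfR_\lambda$, and this is precisely what lets proxy-smallness survive the operation.
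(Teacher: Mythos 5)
Your proof is correct and follows essentially the same route as the paper, which simply observes that the argument for closure under coproducts (Lemma~\ref{lem:coprods}) goes through verbatim with thick subcategories $\sfR_\lambda$ in place of objects $P_\lambda$: take $\bigcup_\lambda(\sfR_\lambda\cap\sfK^\mathrm{c})$ as the set of proxies for $\thick(\bigcup_\lambda\sfR_\lambda)$. The only cosmetic difference is that you further close this set of proxies under $\thick(-)$ inside $\sfK^\mathrm{c}$, which is harmless and not needed; your reformulation of proxy-smallness for a thick $\sfR$ as $\sfR\subseteq\loc(\sfR\cap\sfK^\mathrm{c})$ and the complete-lattice remark are accurate and in the spirit of Remarks~\ref{def_set_version}\,ff.
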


\begin{proof}
    By the same argument as for Lemma \ref{lem:coprods}, if $\{P_\lambda\mid \lambda \in \Lambda\}$ is a family of proxy-small thick subcategories, then $\thick(P_\lambda\mid \lambda \in \Lambda)$ is proxy-small as well.
\end{proof}



\section{A perspective on complete intersections}
\label{sec:apps}

We would like to use this new conceptual understanding of proxy-smallness to our benefit. In this section we provide some first applications, present some more examples, and pose some problems.


One setting in which proxy-smallness has been studied extensively is $\sfK = \sfD(R)$ for $R$ a commutative noetherian ring. In this case $\sfD(R)$ satisfies the telescope conjecture by \cite{NeeChro} and so Theorem~\ref{thm:proxysmall} applies in full force. This lets us reformulate various results in terms of the coproduct preservation of certain functors.

As an example, let $\phi\colon R\to S$ be a map of commutative noetherian rings such that $\phi$ is flat and essentially of finite type. Set
\[
\mathrm{HH}^*(S/R,-) \coloneqq \RHom_{S\otimes_R S}(S,-)\colon \sfD(S\otimes_RS)\longrightarrow \sfD(S)\,,
\]
the Hochschild cohomology functor. Viewing $\mathrm{HH}^*(S/R)=\mathrm{HH}^*(S/R,S)$ in the enhanced sense, i.e.\ as a dg algebra or ring spectrum, and combining Theorem~\ref{thm:proxysmall} with the work of Briggs, Iyengar, Letz, and Pollitz \cite{BILP} we deduce the following corollary to their work.

\begin{cor}\label{cor:ci}
With notation as above,  $\phi$ is locally complete intersection if and only
\[
 \mathrm{HH}^*(S/R,-)\otimes_{\mathrm{HH}^*(S/R)} S \colon \sfD(S\otimes_RS)\longrightarrow \sfD(S)
\]
preserves coproducts. Moreover, if this is the case then the above gives local cohomology on $\Spec S \times_{\Spec R} \Spec S$ with support on the diagonal $\Delta$ and 
\[
-\otimes_{\mathrm{HH}^*(S)} S \colon \sfD(\mathrm{HH}^*(S/R)) \to \sfD_\Delta(S\otimes_R S)
\]
is a localization. \qed
\end{cor}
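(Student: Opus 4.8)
The plan is to deduce Corollary~\ref{cor:ci} by recognizing $S \in \sfD(S\otimes_R S)$ as the object to which Theorem~\ref{thm:proxysmall} is applied. First I would record the standard identifications: since $\phi$ is flat and essentially of finite type, $S$ is a perfect $S\otimes_R S$-complex after localization, hence $S$ is proxy-small in $\sfD(S\otimes_R S)$ if and only if it is ``locally perfect'' along the diagonal in the relevant sense; more to the point, $\thick(S)$ in $\sfD(S\otimes_R S)$ is generated by the Koszul-type complexes cutting out the diagonal $\Delta$, so $\loc(S) = \sfD_\Delta(S\otimes_R S)$, the category of complexes supported on the diagonal, and condition (i) of Theorem~\ref{thm:proxysmall} is automatic here because $\sfD(S\otimes_R S)$ satisfies the telescope conjecture by \cite{NeeChro} (so one may invoke Corollary~\ref{cor:proxysmall} directly). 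The derived endomorphism ring of $S$ over $S\otimes_R S$ is precisely $\mathrm{HH}^*(S/R)$ viewed in the enhanced sense, and $\RHom_{S\otimes_R S}(S,-) = \mathrm{HH}^*(S/R,-)$ by definition, so condition (ii) of Theorem~\ref{thm:proxysmall}, namely that $\RHom_{\sfK}(P,-)\otimes_E P$ preserves coproducts, translates verbatim into the statement that $\mathrm{HH}^*(S/R,-)\otimes_{\mathrm{HH}^*(S/R)} S$ preserves coproducts.

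Next I would supply the bridge to complete intersections. By Corollary~\ref{cor:proxysmall}, $S$ is proxy-small in $\sfD(S\otimes_R S)$ if and only if $\mathrm{HH}^*(S/R,-)\otimes_{\mathrm{HH}^*(S/R)} S$ preserves coproducts. So it remains to identify proxy-smallness of $S$ in $\sfD(S\otimes_R S)$ with the locally complete intersection condition on $\phi$. This is exactly the content of the cited work \cite{BILP} (building on \cite{pollitzci}): Pollitz's theorem characterizes complete intersection rings by proxy-smallness of all bounded complexes, and \cite{BILP} gives the relative version, that $\phi$ is locally complete intersection precisely when the diagonal module $S$ is proxy-small over $S\otimes_R S$ (equivalently, when every object of $\sfD^\mathrm{b}(\modu S)$, regarded via restriction, is proxy-small, or when $S$ has finite CI-dimension over $S\otimes_R S$). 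I would cite the precise statement from \cite{BILP} and feed it into Corollary~\ref{cor:proxysmall} to obtain the equivalence.

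For the ``moreover'' clause: once $S$ is known to be proxy-small, Proposition~\ref{prop:effective} (effective constructibility) identifies the acyclization functor $\Gamma = i_* i^!$ for $\loc(S) = \sfD_\Delta(S\otimes_R S) \subseteq \sfD(S\otimes_R S)$ with $\RHom_{S\otimes_R S}(S,-)\otimes_{\mathrm{HH}^*(S/R)} S = \mathrm{HH}^*(S/R,-)\otimes_{\mathrm{HH}^*(S/R)} S$; by construction $\Gamma$ computes local cohomology with support on the diagonal, establishing the second assertion. For the third, Theorem~\ref{prop:loc} applied to the proxy-small object $S$ (with $E = \mathrm{HH}^*(S/R)$, and noting $\mathrm{HH}^*(S) = \mathrm{HH}^*(S/R)$ under the standing hypotheses, or rather that the relevant endomorphism ring is $\mathrm{HH}^*(S/R)$) gives that $-\otimes_{\mathrm{HH}^*(S/R)} S \colon \sfD(\mathrm{HH}^*(S/R)) \to \loc(S) = \sfD_\Delta(S\otimes_R S)$ is a localization, which moreover preserves products.

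The main obstacle I anticipate is not in this paper's machinery at all — Corollary~\ref{cor:proxysmall}, Proposition~\ref{prop:effective}, and Theorem~\ref{prop:loc} do all the heavy lifting once the dictionary is set up. The real work is the input from \cite{BILP}: one must pin down the exact equivalence between ``$\phi$ locally complete intersection'' and ``$S$ proxy-small in $\sfD(S\otimes_R S)$'', including checking that the flat, essentially-of-finite-type hypotheses on $\phi$ are precisely what is needed for $\sfD(S\otimes_R S)$ to be a reasonable ambient category (Noetherian, satisfying the telescope conjecture) and for $\mathrm{HH}^*(S/R)$ to have the expected homotopical meaning. A secondary subtlety is the notational point $\mathrm{HH}^*(S) = \mathrm{HH}^*(S/R)$ appearing in the last displayed formula, which should be read in the relative sense; I would make sure the statement is phrased so that the endomorphism ring being tensored over is unambiguously $\mathrm{HH}^*(S/R)$.
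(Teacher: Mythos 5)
Your proposal is correct and follows essentially the same route the paper intends: identify $P=S$, $E=\mathrm{HH}^*(S/R)$ in $\sfK=\sfD(S\otimes_R S)$, invoke the telescope conjecture to pass through Corollary~\ref{cor:proxysmall}, cite \cite{BILP} for the equivalence between ``$\phi$ lci'' and ``$S$ proxy-small in $\sfD(S\otimes_R S)$'', and then read off the ``moreover'' clauses from Proposition~\ref{prop:effective} and Theorem~\ref{prop:loc}. This is exactly why the paper records the statement with a \qed and no proof.
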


There are other settings where proxy-smallness of the diagonal appears, although this point of view does not seem to have been made explicit in the literature.

\begin{cor}\label{cor:kG}
Let $G$ be a $p$-group and $k$ a field. Then $\sfD(k(G\times G))$ is a localization of $\sfD(\mathrm{HH}^*(kG))$. 
\end{cor}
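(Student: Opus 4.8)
The plan is to realise $P\coloneqq kG$, regarded as the diagonal bimodule — that is, as an object of $\sfK\coloneqq\sfD(k(G\times G))$ via the algebra isomorphism $(kG)^{\op}\cong kG$ given by the antipode — and to note that its derived endomorphism ring is exactly $E=\RHom_{k(G\times G)}(kG,kG)=\mathrm{HH}^*(kG)$ in the enhanced sense; then apply Theorem~\ref{prop:loc}. Throughout I take $\mathrm{char}\,k=p$, as the notation suggests (if $p\nmid|G|$ then $kG$ is semisimple and the statement as written is not the intended one). Under this hypothesis $k(G\times G)=kG\otimes_k kG$ is a local Artinian $k$-algebra with residue field $k$, so in particular $\sfK^{\mathrm c}=\Perf(k(G\times G))$ has no thick subcategory other than $0$ and itself.

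The heart of the matter is that $P=kG$ is proxy-small in $\sfK$, and for this one wants a nonzero perfect complex inside $\thick_{k(G\times G)}(kG)$. When $G$ is abelian this is immediate from Corollary~\ref{cor:ci}: the structure map $k\to kG\cong k[x_1,\dots,x_n]/(x_1^{p^{a_1}},\dots,x_n^{p^{a_n}})$ is flat, essentially of finite type and locally complete intersection, so $\mathrm{HH}^*(kG/k,-)\otimes_{\mathrm{HH}^*(kG/k)}kG$ preserves coproducts; since $\sfD(k(G\times G))$ satisfies the telescope conjecture by \cite{NeeChro}, Corollary~\ref{cor:proxysmall} gives that $kG$ is proxy-small, and any set of small proxies meets $\Perf(k(G\times G))$ nontrivially. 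For an arbitrary $p$-group one instead runs the Koszul-complex argument of \cite{DGI_finiteness} (see also \cite{BILP}): $\mathrm{HH}^*(kG)=\bigoplus_{[g]}H^*(C_G(g),k)$ is a finitely generated $k$-algebra by Evens–Venkov finite generation, hence finite over a graded-polynomial subalgebra $k[\chi_1,\dots,\chi_c]$ with the $\chi_i$ of positive degree. These classes are graded-central in $\mathrm{HH}^*(kG)$, so they give self-maps $\chi_i\colon kG\to kG[|\chi_i|]$ in $\sfK$, and coning them off iteratively produces $K_\chi\in\thick_{k(G\times G)}(kG)$ with $K_\chi\neq0$ and with $\RHom_{k(G\times G)}(K_\chi,K_\chi)$ finite-dimensional over $k$ (killing the $\chi_i$ kills the parameters over which $\mathrm{HH}^*(kG)$ is finite). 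A nonzero bounded complex over the self-injective Artinian algebra $k(G\times G)$ with finite-dimensional derived endomorphism ring has empty cohomological support variety and is therefore perfect; hence $K_\chi$ is a nonzero object of $\thick_{k(G\times G)}(kG)\cap\Perf(k(G\times G))$, and $kG$ is proxy-small with $\mathbf p=\{K_\chi\}$.

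With proxy-smallness in hand the remainder is formal: $\thick_{k(G\times G)}(kG)\cap\Perf(k(G\times G))$ is then a nonzero thick subcategory of $\Perf(k(G\times G))$, so it is all of $\Perf(k(G\times G))$, whence $k(G\times G)\in\thick(kG)\subseteq\loc(kG)$ and therefore $\loc(kG)=\loc(k(G\times G))=\sfK$. Theorem~\ref{prop:loc} now yields that $(-)\otimes_E kG\colon\sfD(\mathrm{HH}^*(kG))\to\loc(kG)=\sfD(k(G\times G))$ is a (product-preserving) localization, which is the claim. The step I expect to be the main obstacle is proxy-smallness of the diagonal bimodule for non-abelian $G$: Corollary~\ref{cor:ci} is stated for commutative rings and does not apply verbatim, so one must either transport the complete-intersection input of \cite{BILP} to this setting or, as sketched, run the homogeneous-system-of-parameters construction by hand — the delicate points there being the finite generation of $\mathrm{HH}^*(kG)$ as a ring and the implication "finite-dimensional $\RHom$-algebra $\Rightarrow$ perfect" over $k(G\times G)$.
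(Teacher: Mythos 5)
Your argument is correct but considerably heavier than the paper's, which avoids group cohomology entirely. The paper observes that, $G\times G$ being a $p$-group, $k(G\times G)$ is a local Artinian algebra and hence $k(G\times G)\in\thick(k)$ in $\sfD(k(G\times G))$; applying the exact functor $kG\otimes_k(-)$ (the Hopf tensor product of $k(G\times G)$-modules, which sends $k$ to the diagonal bimodule $kG$) yields $kG\otimes_kk(G\times G)\in\thick(kG)$, and this is a nonzero free $k(G\times G)$-module by the standard Hopf-algebra untwisting, hence a compact generator of $\sfD(k(G\times G))$. That single line replaces everything you do with Evens--Venkov finite generation of $\mathrm{HH}^*(kG)$, the homogeneous system of parameters and its iterated cone $K_\chi$, the support-variety / Dade's-lemma passage from boundedness of $\RHom(K_\chi,K_\chi)$ to perfection, and the case split between abelian and non-abelian $G$. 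It also sidesteps the one step you leave implicit: in the non-abelian case you still need that every nonzero perfect complex over the non-commutative local ring $k(G\times G)$ generates $\Perf(k(G\times G))$ as a thick subcategory (true, but itself a Benson--Carlson--Rickard-type input), whereas the paper's proxy is literally a free module and so generates for trivial reasons. Nothing you write is wrong, but the Hopf-algebra trick gives a much shorter proof with far lighter prerequisites, and it is worth internalizing as the ``right'' argument here.
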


\begin{proof}
Since  $k(G\times G)\cong kG^\mathrm{e}$ is in $\thick(k)$, it follows that $kG \otimes_k k(G\times G)$ is  in $\thick(kG)$, which exhibits a non-zero compact object in $\thick(kG)$. Thus $kG$ is proxy-small over  $k(G\times G)$.  This proves the  corollary.
\end{proof}


\begin{qn}
Is there a natural description of the kernel of the localization functors $\sfD(\mathrm{HH}^*(S)) \to \sfD_\Delta(S\otimes_R S)$ and $\sfD(\mathrm{HH}^*(kG)) \to \sfD(k(G\times G))$?
\end{qn}

In fact, it seems in examples that proxy-smallness of the diagonal bimodule and finite generation of Hochschild cohomology occur together. This prompts a definition (which does not seem to have appeared in the literature yet) and some questions.

\begin{defn}\label{defn:ci}
Let $E$ be a commutative ring spectrum and let $R$ be a commutative $E$-algebra. We say that $R$ is \emph{complete intersection} (relative to $E$) if $\Delta$ is proxy-small in $\sfD(R^\op\otimes_E R)$ where $\Delta$ is $R$ viewed with the obvious bimodule structure (i.e.\ the bimodule corresponding to the identity functor).
\end{defn}

\begin{rem}
One can modify the above definition by only asking for some relative version of proxy-smallness. For instance, $\sfD(R^\op\otimes_E R)$ is monoidal and we could ask for $\otimes$-proxy-smallness of the diagonal. However, since the diagonal is the identity for the monoidal structure this turns out to be trivial. 

A good definition should involve some more restrictive closure condition, for instance proxy-smallness relative to the invertible bimodules as in Example~\ref{ex:qci}.
\end{rem}

\begin{qn}
Greenlees has given several versions of the complete intersection property for ring spectra \cite{Greenlees2018}*{Definition~14.3}. How does this definition fit into his hierarchy?
\end{qn}

\begin{qn}
If the graded-commutative ring $\mathrm{HH}^*(R/S)$ is finitely generated is $\Delta$ proxy-small? If $\Delta$ is proxy-small does it imply $\mathrm{HH}^*(R/S)$ is finitely generated?
\end{qn}

\begin{ex}\label{ex_gor_mon} It follows from work of Dotsenko, G\'elinas and Tamaroff that noncommutative Gorenstein monomial algebras are complete intersection in the sense of Definition~\ref{defn:ci}.

Let $Q$ be a finite quiver, $k$ a field, and $I$ an ideal in $kQ$ generated by paths. Assume that the monomial algebra $\Lambda= kQ/I$ is Iwanaga-Gorenstein. According to \cite[Theorem 6.3]{DGT2023} there is an explicit map $\chi\colon \Sigma^{-p} \Lambda \to \Lambda$ in $\sfD(\Lambda^\op\otimes_k \Lambda)$ such that, for all finite dimensional $\Lambda$-modules $M$ and $N$, $\Ext^*_\Lambda(M,N)= \mathrm{HH}^*(\Lambda,\Hom_k(M,N))$ is finitely generated over the subring $k[\chi]\subseteq \mathrm{HH}^*(\Lambda)$; that is, $\Lambda$ satisfies the so-called ${\bf Fg}$ condition. Varying $M$ and $N$ over the simple $\Lambda$-modules we obtain all simple $\Lambda$-bimodules as $\Hom_k(M,N)$. 
Therefore, for each simple bimodule $s$, applying the functor $\Ext_{\Lambda^\op\otimes_k \Lambda}(-,s)$ to the triangle
\[
\Sigma^{-p} \Lambda \xrightarrow{\ \chi\ }\Lambda \to \mathrm{cone}(\chi)
\]
shows that $\Ext_{\Lambda^\op\otimes_k \Lambda}(\mathrm{cone}(\chi),s)$ is finite dimensional. We can conclude that $\mathrm{cone}(\chi)$ is perfect over $\Lambda^\op\otimes_k \Lambda$. On the other hand, $\mathrm{cone}(\chi^n)$ is in $\thick(\mathrm{cone}(\chi))$ for all $n$, and we may recover the diagonal bimodule as the homotopy limit $\Lambda =  \holim(\mathrm{cone}(\chi^n))$. This shows that $\Lambda\in \loc(\mathrm{cone}(\chi))$, and so $\Lambda$ is proxy-small in $\sfD(\Lambda^\op\otimes_k \Lambda)$ with proxy $\mathrm{cone}(\chi)$.

Likewise, we can deduce that every object $M$ of $\sfD^\mathrm{b}(\Lambda)$ is proxy-small in $\sfD(\Lambda)$ with proxy $M\otimes_\Lambda\mathrm{cone}(\chi)$.

This homological behaviour is similar to that of hypersurface rings in commutative algebra, where $\chi$ is the cohomology operator of Gulliksen. In that case one can always take $p=2$, reflecting the $2$-periodicity of matrix factorizations over  hypersurface rings.

The class of Gorenstein monomial algebras includes all gentle algebras by \cite{GeissReiten}.
\end{ex}

    In other complete-intersection-like examples the diagonal bimodule is not enough, but rather the set of invertible bimodules is proxy-small in the sense of Remark \ref{def_set_version}. 

\begin{ex}\label{ex:qci}
 Let $q$ be a unit in a field $k$, and consider the quantum complete intersection
\[
\Lambda = k\langle x,y\rangle /(x^m,y^n, xy-qyx).
\]
For any automorphism $\sigma$ of $\Lambda$ there is an invertible bimodule $\Lambda_\sigma$ with underlying $k$-module $\Lambda$ and with left and right action $a\cdot m \cdot c = am\sigma(c)$ (multiplication in $\Lambda$). We will use the two commuting automorphisms $\alpha$ and $\beta$ given by $\alpha(x)=q^{-1}x$, $\alpha(y)=y$, and  $\beta(y)=qy$, $\beta(x)=x$ below.

The extension algebra $\Ext^*_\Lambda(k,k)$ of $\Lambda$ is finite over a subalgebra $k\langle u,v\rangle/(uv-q^{mn}vu)$
 \cite[5.3]{BerghOppermann}. Moreover one can compute using \cite[3.1]{BW} that, under the map 
\[
{\rm HH}^*(\Lambda,\Lambda_{\beta^m})\to {\rm HH}^*(\Lambda,k) = \Ext^*_\Lambda(k,k),
\]
the class $u$ lifts canonically to a class $\chi_u$ in ${\rm HH}^*(\Lambda,\Lambda_{\beta^m})$. Similarly, $v$ lifts to a class $\chi_v$ in ${\rm HH}^*(\Lambda,\Lambda_{\alpha^n})$. These cohomology operators commute up to a factor of $q^{mn}$, that is, they fit into a commuting square 
\[
\begin{tikzcd}[column sep = 13mm]
\Sigma^{-4}\Lambda \arrow[r, "\chi_u"] \arrow[d, "\chi_v"'] & \Sigma^{-2}2\Lambda_{\beta^m} \arrow[d, "\chi_v" ]  \\
\Sigma^{-2}2\Lambda_{\alpha^n}\arrow[r, "q^{mn}\chi_u"]  & \Lambda_{\alpha^n\beta^m}
\end{tikzcd}
\]
in $\sfD(\Lambda^\op\otimes_k \Lambda)$. Finally, we take the colimit of this diagram (that is, take the cone twice) to obtain a quantum Koszul complex $K(\chi_u,\chi_v)$ \cite{Manin}. A similar argument to Example \ref{ex_gor_mon} shows that $K(\chi_u,\chi_v)$ (and all of its twists $K(\chi_u,\chi_v)_{\alpha^i\beta^j}$) is perfect in $\sfD(\Lambda^\op\otimes_k \Lambda)$. Just as before, we can also recover $\Lambda$ (and all of its twists  $\Lambda_{\alpha^i\beta^j}$) from the objects $K(\chi_u,\chi_v)_{\alpha^i\beta^j}$ by taking a homotopy limit. Altogether, we have sketched a proof that the set of invertible bimodules $\{\Lambda_{\alpha^i\beta^j} \mid i,j\in \ZZ\}$ is proxy-small, with a set $\{K(\chi_u,\chi_v)_{\alpha^i\beta^j} \mid i,j\in \ZZ\}$ of proxies.

This example is in the spirit of Section~\ref{sec:relative}: in order for the thick closure to be large enough we have closed up with respect to the action of the derived Picard group (or just  the subgroup generated by $\Lambda_\alpha$ and $\Lambda_\beta$, in this case). One can formulate it instead by saying that the diagonal bimodule $\Lambda$ is $\otimes$-proxy-small with respect to the action of the ind-completion of $\thick(\Lambda_{\alpha^i\beta^j} \mid i,j\in \ZZ)$.
\end{ex}

\begin{ex}
A simply connected finite CW-complex $X$ is called \emph{rationally elliptic} if $\pi_n(X)\otimes_{\mathbb{Z}}\mathbb{Q}=0$ for all but finitely many $n$. The results of \cite{GHS2013} show that if $X$ is rationally elliptic then the dg algebra $C^*(X;\mathbb{Q})$ of rational cochains on $X$ has a proxy-small diagonal, and therefore is complete intersection over $\mathbb{Q}$ in the sense of \ref{defn:ci}.

Rationally elliptic spaces are ``spherically complete intersection" in the sense of \cite{GHS2013}---they can be constructed, up to rational homotopy equivalence, by starting with a point and using finitely many spherical fibrations. Therefore by \cite[Theorem 10.1]{GHS2013} they are ``endomorphism complete intersection", that is, there is a sequence of triangles of dg $C^*(X\times X;\mathbb{Q})$-modules
\[
C^*(X)=M_0\xrightarrow{\ g_1\ } \Sigma^{n_1}M_0 \to M_1,\quad\cdots\quad M_{c-1}\xrightarrow{\ g_c\ } \Sigma^{n_c} M_{c-1}\to M_c
\]
such that $M_c$ is a perfect dg $C^*(X\otimes X;\mathbb{Q})$-module. Each $M_i$ has finite dimensional total homology, and by \cite[Remark 10.2]{GHS2013} each $g_i$ has strictly positive cohomological degree. Therefore, as in Example \ref{ex_gor_mon}, we can recover each $M_{i-1}$ from $M_i$ by taking a countable homotopy colimit. It follows that $C^*(X)$ is proxy-small as a dg module over $C^*(X\times X)\simeq C^*(X)\otimes C^*(X)$ with proxy $M_c$, as claimed.

Various other complete-intersection-like conditions on a space are considered in \cite{GHS2013}. We also note that the results of \cite{GHS2013} are stated much more generally for strongly noetherian rational spaces (we have restricted to finite CW complexes only for the sake of familiarity).
\end{ex}


\subsection*{Proxy-smallness does not come cheap}

It is natural to ask if there are there are some easy conditions we could put on $P$ and $E = \RHom(P,P)$ that would guarantee $P$ is proxy-small. The following example indicates that even imposing some very strong finiteness conditions on $E$ is not  sufficient.

\begin{ex}
As we saw in Example~\ref{ex:nosummands} in $\sfD(\ZZ)$ the object $\QQ$ is not proxy-small, and so having endomorphism ring a field is not enough.
\end{ex}

\begin{ex}
Consider $R = k[x,y]/(x,y)^2$, for some field $k$, which is an artinian non-Gorenstein ring. We take for $P$ the indecomposable injective module $\Hom_k(R,k)$. Then $\RHom_R(P,P)$ is just $R$. In particular, it is discrete (i.e.\ no higher homotopical information required) and artinian.

Nevertheless, $P$ is not proxy-small. Indeed, $\thick(P)$ consists of the bounded complexes of injectives with finite dimensional cohomology and so $\thick(P) \cap \sfD^\mathrm{perf}(R)$ consists of complexes with both finite projective and injective dimension. The only such complex can be $0$, because  $R$ is not Gorenstein; see for example \cite[2.10]{Foxby_flat}. 
\end{ex}

None of this is too surprising. Indeed, proxy-smallness of $P$ depends crucially on how $P$ sits in $\sfK$ and so it is quite a subtle property.
A related question is:

\begin{qn}
What do proxy-small objects form? That is to say, if we fix a compactly generated localizing subcategory $\sfJ$ what is the right structure on the collection of proxy-small objects which generate $\sfJ$?
\end{qn}





\bibliographystyle{amsplain}

\end{document}